\documentclass{article}
\pdfoutput=1
\usepackage[utf8]{inputenc}
\usepackage[T1]{fontenc}
\usepackage{amsmath,amssymb,amsfonts,amsthm,graphicx,mathrsfs,url}
\usepackage[usenames,dvipsnames]{color}
\usepackage[colorlinks=true,linkcolor=Red,citecolor=Green,urlcolor=Blue]{hyperref}
\usepackage[super]{nth}
\usepackage[open, openlevel=2, depth=3, atend]{bookmark}
\usepackage{physics}
\hypersetup{pdfstartview=XYZ}

\usepackage{bbold}

\def\?[#1]{\textbf{[#1]}\marginpar{\Large{\textbf{??}}}}

\setlength{\textheight}{8.70in} \setlength{\oddsidemargin}{0.00in}
\setlength{\evensidemargin}{0.00in} \setlength{\textwidth}{6.2in}
\setlength{\topmargin}{0.00in} \setlength{\headheight}{0.18in}
\setlength{\marginparwidth}{1.0in}
\setlength{\abovedisplayskip}{0.2in}
\setlength{\belowdisplayskip}{0.2in}
\setlength{\parskip}{0.05in}

\DeclareGraphicsRule{*}{mps}{*}{}

\frenchspacing

\numberwithin{equation}{section}

\newtheorem{theorem}{Theorem}[section]

\newtheorem{rem}[theorem]{Remark}

\renewcommand{\tilde}{\widetilde}          
\DeclareMathSymbol{\leqslant}{\mathalpha}{AMSa}{"36} 
\DeclareMathSymbol{\geqslant}{\mathalpha}{AMSa}{"3E} 
\DeclareMathSymbol{\eset}{\mathalpha}{AMSb}{"3F}     
\renewcommand{\leq}{\;\leqslant\;}                   
\renewcommand{\geq}{\;\geqslant\;}                   



\newcommand{\Z}{\mathbb{Z}}


\def\bi{\begin{itemize}}
\def\ei{\end{itemize}}

\def\bnum{\begin{enumerate}}
\def\enum{\end{enumerate}}
\def\<#1{\langle #1 \rangle}

\def\g{\mathbf{g}}

\newtheorem{thm}{Theorem}
\newtheorem{prop}{Proposition}

\newtheorem{cor}{Corollary}
\newtheorem{defn}{Definition}
\newtheorem{lem}{Lemma}
\usepackage{caption}
\usepackage[capitalise]{cleveref}

\Crefname{lem}{Lemma}{Lemmas}

\Crefname{thm}{Theorem}{Theorems}

\Crefname{cor}{Corollary}{Corollaries}
\Crefname{bij}{Bijection}{Bijections}

\Crefname{rem}{Remark}{Remarks}
\crefname{rem}{remark}{remarks}

\title{Simple formulas for constellations and bipartite maps with prescribed degrees}

\author{Baptiste LOUF \footnote{BL is supported by ERC-2016-STG 716083 "CombiTop".}\\ IRIF, Université Paris Diderot - Paris 7 \\
Bâtiment Sophie Germain, 75205 Paris Cedex 13, France \\
blouf@irif.fr}




\begin{document}

%
\maketitle
\abstract{
We obtain simple quadratic recurrence formulas counting bipartite maps on surfaces with prescribed degrees (in particular, $2k$-angulations), and constellations. These formulas are the fastest known way of computing these numbers. 

Our work is a natural extension of previous works on integrable hierarchies (2-Toda and KP), namely the Pandharipande recursion for Hurwitz numbers (proven by Okounkov and simplified by Dubrovin--Yang--Zagier), as well as formulas for several models of maps (Goulden--Jackson, Carrell--Chapuy, Kazarian--Zograf).  As for those formulas, a bijective interpretation is still to be found. We also include a formula for monotone simple Hurwitz numbers derived in the same fashion.

These formulas also play a key role in subsequent work of the author with T. Budzinski establishing the hyperbolic local limit of random bipartite maps of large genus.}

\vspace{0.3cm}
\footnotesize

\noindent{\bf Keywords:}  maps, Hurwitz numbers, Toda hierarchy, constellations


\normalsize

\section{Introduction}
A map is  a combinatorial object describing the embedding up to homeomorphism of a multigraph on a compact oriented surface. A bipartite map is a map with black and white vertices, each edge having a black end and a white end. Constellations are generalizations of bipartite maps with more colors (see Section \ref{sec:def_results} for precise definitions).

Map enumeration has been an important research topic for many years now, going back to Tutte \cite{Tutte} with planar maps. He used analytic techniques on generating functions, and later on, Schaeffer enumerated planar maps bijectively \cite{theseSch}, with many generalizations (see for instance \cite{BDG,BF,AP,CMS,Lep}).  The enumeration of maps was extended to other models: for instance, asymptotic formulas were obtained by Bender and Canfield \cite{BC} for maps of higher genus, by Gao \cite{Gao} for maps with prescribed degrees, and Chapuy \cite{Constellations} for constellations. Another way to count maps is to see them as factorizations of permutations and to use algebraic properties of $\mathfrak{S}_n$. In particular, maps fit in the more general context of weighted Hurwitz numbers (see e.g. \cite{ACEH}). Their generating functions satisfy integrable hierarchies of PDEs that arose from mathematical physics, namely the KP and 2-Toda hierarchies (a good introduction can be found in \cite{Solitons}).

The first numbers that were studied from the point of view of integrable hierarchies were Hurwitz numbers, that enumerate ramified coverings of the sphere. Pandharipande conjectured a recurrence formula for those numbers \cite{Pandharipande}, which was proven by Okounkov \cite{Ok} and later simplified by Dubrovin, Yang and Zagier \cite{Zagier}. Later, recurrence formulas for maps were found, starting with Goulden and Jackson for triangulations \cite{GJ}. They were followed by Carrell and Chapuy for general maps \cite{CC}, and Kazarian and Zograf for bipartite maps \cite{KZ}. All these works start from the fact that an underlying generating function is a "tau function" of an integrable hierarchy, and then use ad-hoc techniques to obtain explicit recurrence formulas. The generality of this second step is not well understood. 
The approach developed in \cite{GJ,CC,KZ} does not generalize to constellations, and neither to controlling face degrees (except for the particular minimal case of triangulations \cite{GJ}). On the other hand, in \cite{Ok,Zagier}, formulas are derived only for Hurwitz numbers unramified at $0$ and $\infty$ (which corresponds to maps without control over the degrees of the faces and/or vertices).

\textbf{Contributions of this article:}
We manage to combine these two approaches in the context of maps, and we derive recurrence formulas for bipartite maps with prescribed degrees, allowing us in particular to derive a formula for bipartite $2k$-angulations. We also find recurrence formulas for constellations.

These formulas are, up to our knowledge, the simplest and fastest way to calculate those numbers (in all models, it takes $O(n^2g^3)$ arithmetic operations to calculate the coefficient for $n$ edges and genus $g$, see Remark \ref{rem_fastest}).

In addition to the computational aspect, such recurrence formulas are the only tool we know of in the study of asymptotic properties of large genus maps: the Goulden--Jackson formula played a key role in the recent proof \cite{BudzLouf} of the Benjamini--Curien conjecture \cite{PSHT} of the convergence of random high genus triangulations towards a random hyperbolic map. Similarly, the results of this paper are necessary in the study of random high genus bipartite maps in an article in preparation by T. Budzinski and the author.

\textbf{Structure of the paper:}
In Section \ref{sec:def_results}, we will give precise definitions and state our main results. The rest of the paper presents the main steps of the proof.
The first part of the proof is common to all models: we introduce the "tau function" $\tau$, a certain generating function for constellations. This function, along with some auxilliary functions $\tau_n$, classically satisfies a set of differential equations called the $2$-Toda hierarchy. Our first contribution, inspired by \cite{Ok}, is to link $\tau$ to the $\tau_n$ and derive an equation involving $\tau$ only (Proposition \ref{eq_master}). This will be presented in Section \ref{sec:weighted_hurwitz}.
From this equation, specialized to the model we wish for (bipartite maps or constellations), we perform a few combinatorial operations (that are specific to the model, similarly as in \cite{CC, GJ, KZ}) to obtain our formulas. We will present this in details for bipartite maps in Section \ref{sec:rec_formulas}, and we briefly mention the case of constellations. In Section \ref{sec:additional}, we will present additional models, especially one-faced constellations, and in Section \ref{sec:monotone} we will derive a similar formula for (simple, unramified) monotone Hurwitz numbers.
\section{Definitions and main results}\label{sec:def_results}

\begin{defn}\label{def_map_const}
A map $M$ is the data of a connected multigraph (multiple edges and loops are allowed) $G$ (called the underlying graph) embedded in a compact oriented surface $S$, such that $S\setminus G$ is homeomorphic to a collection of disks (this implies in particular that $S$ is connected). The connected components of $S\setminus G$ are called the \textit{faces}. 
The \textit{genus} $g$ of $M$ is the genus of $S$ (the number of "handles" in $S$). $M$ is defined up to orientation-preserving homeomorphism.
A \textit{bipartite map} is a map with two types of vertices (black or white), such that each edge connects two vertices of different colors. A bipartite map is said to be \textit{rooted} if a particular edge is distinguished. 

An \textit{$m$-constellation} is a particular kind of map with two kinds of vertices: colored vertices, carrying a "color" between $1$ and $m$, and star vertices. Each edge connects a star vertex to a colored vertex. A star vertex has degree $m$, and its neighbors have color $1$,$2$,…,$m$ in the clockwise cyclic order. A constellation is said to be rooted if a particular star vertex is distinguished. A constellation with $n$ star vertices is said to be \textit{labeled} if each star vertex carries a different label between $1$ and $n$. Since rooting kills all possible automorphisms, there is a $(n-1)!$-to-$1$ correspondence between labeled and rooted constellations with $n$ star vertices. From now on, we will only consider rooted objects unless stated otherwise. 
\end{defn}

Some basic, well-known, properties of maps and constellations will be useful later.
\begin{prop}\label{prop_constellation_bipartite}
Labeled (non-necessarily connected) $m$-constellations with $n$ star vertices are in bijection with $(m+1)$-uples $(\sigma_1,\sigma_2,…,\sigma_m,\phi)$ of permutations of $\mathfrak{S}_n$ such that $\sigma_1\cdot\sigma_2\cdot …\cdot \sigma_m= \phi$. The permutation $\sigma_i$ represents the vertices of color $i$: each vertex is a cycle of $\sigma_i$, and the elements of the cycle represent the neighboring star vertices, in that cyclic order. The permutation $\phi$ encodes the faces, see Figure \ref{constellation} for an example. Bipartite maps are in bijection with $2$-constellations, since each star vertex and its two adjacent edges can be merged into a single edge connecting a black and a white vertex.
\end{prop}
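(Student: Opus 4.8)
The plan is to establish the bijection by translating the combinatorial data of a labeled constellation into permutation data, and verify that this translation is reversible. The key observation is that an $m$-constellation is, combinatorially, a cellular embedding of a bipartite-like graph, and such embeddings are classically encoded by the cyclic order of edges around each vertex (the rotation system). Since each star vertex has degree exactly $m$ with neighbors of colors $1,\dots,m$ in clockwise order, a star vertex labeled $k$ is naturally identified with the integer $k \in \{1,\dots,n\}$, and its $m$ incident edges are canonically indexed by the color of the colored vertex they reach.

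\textbf{Construction of the permutations.} For each color $i \in \{1,\dots,m\}$, I would define $\sigma_i \in \mathfrak{S}_n$ as follows: each colored vertex $v$ of color $i$ is incident to some star vertices, and reading the labels of these star vertices in clockwise cyclic order around $v$ produces a cycle of $\sigma_i$. Taking the product over all color-$i$ vertices (which involve disjoint label sets, since each star vertex has exactly one neighbor of color $i$) yields a well-defined permutation $\sigma_i$. The crucial point is the face permutation: I would set $\phi = \sigma_1 \sigma_2 \cdots \sigma_m$ and check that the cycles of $\phi$ are in bijection with the faces of the map, with cycle lengths recording (a multiple of) the face degrees. This is the standard fact that, for a map given by a rotation system, the face-tracing permutation is the product of the vertex-rotation and edge permutations; here the bipartite/constellation structure makes the edge permutation implicit in the color-indexing, so the face permutation comes out directly as the product $\sigma_1 \cdots \sigma_m$.

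\textbf{Reversibility.} Conversely, given $(\sigma_1,\dots,\sigma_m,\phi)$ with $\sigma_1\cdots\sigma_m = \phi$, I would reconstruct the constellation: create $n$ star vertices labeled $1,\dots,n$; for each color $i$ and each cycle of $\sigma_i$, create a colored vertex of color $i$ and attach it by edges to the star vertices indexed in that cycle, arranging the cyclic order around the colored vertex to match the cycle. The clockwise-order condition on the $m$ neighbors of each star vertex fixes how the local half-edges fit together, and the embedding surface is recovered by gluing disks along the cycles of $\phi$. I would verify that these two maps are mutually inverse, and that connectedness of the constellation corresponds to transitivity of the group generated by $\sigma_1,\dots,\sigma_m$ (though the statement here is for non-necessarily connected objects, so no transitivity hypothesis is needed).

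\textbf{The main obstacle} I expect is purely a matter of bookkeeping rather than depth: being careful that the convention for $\phi$ as a product (rather than, say, $\phi^{-1}$ or a product in reverse order) correctly matches the clockwise-versus-counterclockwise orientation conventions, so that the cycles of $\phi$ genuinely trace out the faces. Getting the orientation conventions consistent across the rotation system, the product order, and the face-tracing is the only delicate step; once fixed, the bipartite case follows immediately by specializing to $m=2$ and merging each star vertex with its two edges, identifying $2$-constellations with bipartite maps as stated.
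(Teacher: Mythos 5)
Your proposal is correct, but there is nothing in the paper to compare it against: the paper states this proposition as a basic, well-known fact and gives no proof whatsoever, so your sketch supplies the standard argument (the classical rotation-system encoding of constellations, going back to the permutational theory of maps) that the paper implicitly relies on. Your outline is the right one, and the single point you defer --- that with consistent orientation conventions the face-tracing permutation is exactly $\sigma_1\sigma_2\cdots\sigma_m$ --- is the crux; it can be settled by a short dart computation: with colors $1,\dots,m$ in clockwise order around each star vertex and neighbors read counterclockwise around each colored vertex, the face walk leaving the color-$m$ edge of star vertex $k$ successively visits the color-$m, m-1,\dots,1$ edges of $k$'s images and returns to the color-$m$ dart of the star vertex $\sigma_1\sigma_2\cdots\sigma_m(k)$, so faces correspond bijectively to orbits of $\phi$, as claimed. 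With that verification written out, and the observation that $\phi$ is redundant data (so the bijection is really with $m$-tuples, consistent with the count $(n!)^m$ of labeled, possibly disconnected constellations), your argument is complete.
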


\begin{figure}[!h]

\centering
\includegraphics[scale=1]{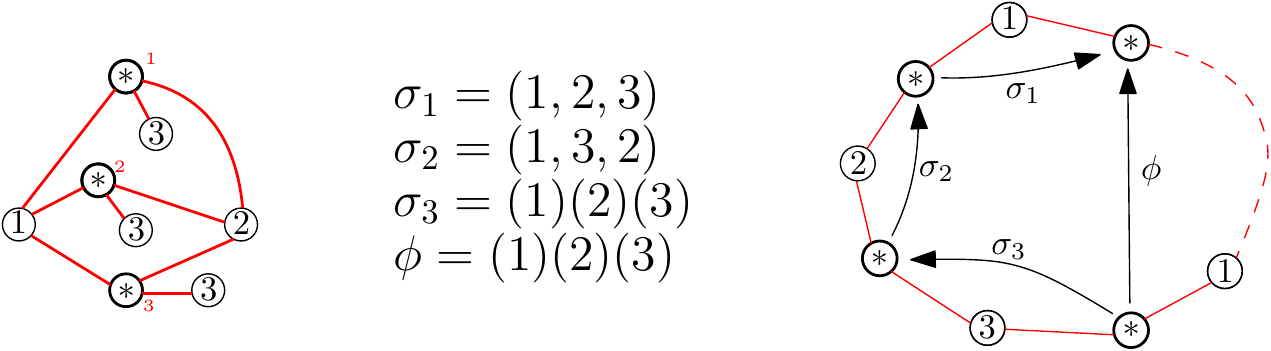}
\caption{Left: a (labeled) $3$-constellation (of genus $0$) and the corresponding permutations, right: the permutation $\phi$, whose cycles describe the faces.}
\label{constellation}
\end{figure}

Our main results are the following theorems:

\begin{thm}\label{thm_biparti_degre}
The number $B_g(\mathbf{f})$ of bipartite maps of genus $g$ with $f_i$ faces of degree $2i$ (for $\mathbf{f}=(f_1,f_2,…)$) satisfies:
\begin{align}\label{rec_biparti_genre}
\binom{n+1}{2}B_g(\mathbf{f})&=\sum_{\substack{\mathbf{s}+\mathbf{t}=\mathbf{f}\\g_1+g_2+g^*=g}}(1+n_1)\binom{v_2}{2g^*+2}B_{g_1}(\mathbf{s})B_{g_2}(\mathbf{t})+\sum_{g^*\geq 0}\binom{v+2g^*}{2g^*+2}B_{g-g*}(\mathbf{f})
\end{align}
where $n=\sum_i if_i$, $n_1=\sum_i is_i$, $v=2-2g+n-\sum_i f_i$, $v_2= 2-2g_2+n_2-\sum_i t_i$ and $n_2=\sum_i it_i$ (the $n$'s  count edges, the $v$'s count vertices, in accordance with the Euler formula), with the convention that $B_g(\mathbf{0})=0$.
\end{thm}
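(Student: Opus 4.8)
The plan is to follow the integrable-hierarchy route outlined in the introduction, realizing $B_g(\mathbf f)$ through a $2$-Toda tau function and then extracting a single scalar recurrence from it. By Proposition~\ref{prop_constellation_bipartite} a (possibly disconnected, labeled) bipartite map with $n$ edges is a pair $(\sigma_1,\sigma_2)\in\mathfrak S_n^2$, with $\sigma_1$ giving the black vertices, $\sigma_2$ the white vertices and $\phi=\sigma_1\sigma_2$ the faces — a face of degree $2i$ being a length-$i$ cycle of $\phi$. I would package these factorizations into the tau function
\begin{equation*}
\tau=\sum_\lambda\Big(\prod_{\square\in\lambda}w\big(c(\square)\big)\Big)\,s_\lambda(\mathbf p)\,s_\lambda(\mathbf q),
\end{equation*}
where $\lambda$ runs over partitions, $c(\square)$ is the content of a box and $s_\lambda$ a Schur function. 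Using the Frobenius character formula and the classical dictionary between symmetric functions of the contents and sums over conjugacy classes, I would choose $w$ so that it carries the face-degree data $\mathbf f$, while the two Schur factors, under the principal specialization counting the numbers of black and white vertices, carry the genus grading through Euler's formula $v-n+\sum_i f_i=2-2g$. This is where the definitions of $v$, $v_2$ etc.\ in the statement come from: all the exponents get repackaged into vertex counts.

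The object $\tau$ is the charge-zero member of a sequence $(\tau_n)$ (obtained by shifting all contents by $n$), and classically the $\tau_n$ satisfy the $2$-Toda hierarchy, whose first equation is the bilinear relation
\begin{equation*}
\partial_{p_1}\partial_{q_1}\log\tau_n=\frac{\tau_{n+1}\,\tau_{n-1}}{\tau_n^{2}}.
\end{equation*}
Following Okounkov, the next step is to eliminate the charge $n$: there is an explicit shift operator carrying $\tau_n$ to $\tau_{n+1}$, and combining it with the bilinear relation collapses the whole hierarchy into a single closed equation for $\tau=\tau_0$ — this is the master equation (Proposition~\ref{eq_master}). Its right-hand side is quadratic in $\tau$, which is the origin of the product $B_{g_1}(\mathbf s)B_{g_2}(\mathbf t)$ in~\eqref{rec_biparti_genre}, while the differential operator on the left produces both the overall prefactor and the second, linear, sum.

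With the master equation in hand, I would pass from the disconnected generating function $\tau$ to the connected one by taking $\log\tau$ (disjoint unions become sums, so the connected coefficients are exactly the $B_g$), specialize the times and the weight $w$ to the bipartite-map model, and then carry out the model-specific combinatorial extraction as in~\cite{CC,GJ,KZ}. Concretely, the differentiations by $p_1$ and $q_1$ mark an edge and a vertex and are responsible for the prefactor $\binom{n+1}{2}$ and for the factor $1+n_1$; the content-product weight, once expanded in the genus grading, contributes the handle factors, the binomials $\binom{v_2}{2g^*+2}$ and $\binom{v+2g^*}{2g^*+2}$ reflecting that a genus-$g^*$ gluing consumes cells in pairs through Euler's formula. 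Matching the coefficient of a fixed monomial $\prod_i(\cdot)^{f_i}$ and of a fixed genus then yields~\eqref{rec_biparti_genre}, the quadratic sum coming from the bilinear right-hand side (a root splitting a map into two submaps carrying $(\mathbf s,g_1)$ and $(\mathbf t,g_2)$ plus $g^*$ extra handles) and the linear sum from the self-gluing contribution.

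The conceptual heart is the charge elimination leading to the master equation, but once that rigid algebraic identity is available the genuine difficulty is the bookkeeping in the last step: turning a PDE in infinitely many time variables into a clean recursion graded by the single integer $g$, and in particular producing the two distinct binomials with their exact arguments. The asymmetry between $\binom{v_2}{2g^*+2}$ in the quadratic term and $\binom{v+2g^*}{2g^*+2}$ in the linear term encodes whether the $g^*$ handles are created by merging two maps or by self-gluing a single one, and keeping the content-product expansion consistent with the Euler-formula substitutions across these two cases is where the care is required.
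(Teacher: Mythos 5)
Your high-level architecture is the same as the paper's — tau function, the bilinear $2$-Toda equation $\partial_{p_1}\partial_{q_1}\log\tau_0=\tau_1\tau_{-1}/\tau_0^2$, elimination of the charge via the shift operator (Lemma \ref{lem_tau_aux}) to get the closed master equation \eqref{eq_master}, then specialization and coefficient extraction — but two of the concrete mechanisms you propose are wrong, and they are precisely the ones that make the prescribed-degree case work. First, you have inverted the roles of the content weight $w$ and the Schur variables: you propose that $w$ carries the face-degree data $\mathbf{f}$ while the Schur factors, principally specialized, count the black and white vertices. In the paper it is the opposite, and this assignment is forced. The times $\mathbf{p}$ must carry the face degrees (a face of degree $2i$ contributes $p_i$), with $q_i=\mathbb{1}_{i=1}$ frozen and the content product $\prod_{\Box}(1+uc(\Box))^2$ (i.e.\ $r=2$, $u_1=u_2=u$) carrying the vertex count: the $2$-Toda equations are PDEs in the times, so the degree data must sit in variables that remain free all the way through \eqref{eq_master} in order to extract $\prod_i p_i^{f_i}$ and produce the convolution $\mathbf{s}+\mathbf{t}=\mathbf{f}$; a multi-index of face degrees cannot be stored in the scalar weight $w$ and recovered afterwards. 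Moreover, the binomials in \eqref{rec_biparti_genre} arise because the charge-elimination substitution acts on $(z,u)$: under $z\mapsto z(1\pm u)^2$, $u\mapsto u/(1\pm u)$, a monomial $z^nu^{2n-v}$ becomes $z^nu^{2n-v}(1\pm u)^v$, so the binomials in the vertex counts come from expanding $(1+u)^{v}+(1-u)^{v}-2=2\sum_{k>0}\binom{v}{2k}u^{2k}$. This mechanism exists only because the vertex count is the exponent of $u$; with your assignment the substitution would never touch the vertex data and no such binomials would appear.

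Second, your combinatorial reading of the two time-derivatives is incorrect, and it hides the origin of the linear sum. After the specialization $q_i=\mathbb{1}_{i=1}$, the derivative $\partial/\partial q_1$ does not mark a vertex: it simply becomes $D=z\partial/\partial z$, i.e.\ it marks an edge. And $\partial/\partial p_1$ marks a face of degree $2$ (a digon), which must then be contracted to a marked edge; the contraction fails exactly when the map is a single edge, giving $\frac{\partial}{\partial p_1}H=z+u^2zB$. The exceptional term $z$ becomes the $u^{-2}$ in \eqref{eq_biparti_GF}, and it is this term — not ``the differential operator on the left,'' as you claim — that produces the entire linear sum $\sum_{g^*}\binom{v+2g^*}{2g^*+2}B_{g-g^*}(\mathbf{f})$; similarly, the factor $1+n_1$ comes from the $(D+1)B$ part of the contracted digon, not from a marked vertex. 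Without this digon-contraction lemma you cannot close the master equation in terms of $B$ alone, so this is a genuinely missing step rather than a bookkeeping detail.
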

\begin{thm}\label{thm_constellations}
The numbers $C^{(m)}_{g,n}$ of $m$-constellations of genus $g$ with $n$ star vertices satisfy the following recurrence formula:
\[\binom{n}{2}C^{(m)}_{g,n}=\sum_{\substack{n_1+n_2=n\\n_1,n_2\geq 1\\g=g_1+g_2+g^*}} n_1\binom{(m-1)n_2+2-2g_2}{2g^*+2}C^{(m)}_{g_1,n_1}C^{(m)}_{g_2,n_2}.\]
\end{thm}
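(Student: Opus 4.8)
The plan is to run the same machine that will be applied to bipartite maps in Section \ref{sec:rec_formulas}, specializing the master equation of Proposition \ref{eq_master} to the generating function of $m$-constellations and then extracting coefficients. Since bipartite maps are exactly $2$-constellations (Proposition \ref{prop_constellation_bipartite}), the two derivations run in parallel; the constellation case should in fact be simpler, because here we record only the genus $g$ and the number $n$ of star vertices, not the individual face degrees tracked by $\mathbf f$ in Theorem \ref{thm_biparti_degre}. Concretely, I would introduce the tau function $\tau=\sum_\lambda r_\lambda\,s_\lambda(\mathbf p)\,s_\lambda(\mathbf q)$ for the (possibly disconnected, labeled) $m$-constellations, with a content-product weight $r_\lambda=\prod_{\square\in\lambda}r(c(\square))$ encoding the $m$ colors through the Frobenius/character formula underlying Proposition \ref{prop_constellation_bipartite}, together with a variable tracking the genus.

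To count only by genus and number of star vertices I would specialize the symmetric-function variables $\mathbf p,\mathbf q$ to the values that leave the cycle types of the $\sigma_i$ and of $\phi$ entirely free; this is precisely the specialization that erases the face-degree data kept in the bipartite theorem, which is why no $\mathbf f$-indexed sum, and no analogue of the second (linear) term of \eqref{rec_biparti_genre}, should survive here. I would then pass to connected objects via the free energy $F=\log\tau$, whose coefficients are the connected counts, while keeping careful track of the $(n-1)!$ versus $n!$ normalization coming from the labeled/rooted correspondence of Definition \ref{def_map_const}, and substitute $F$ into the specialized master equation.

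For the extraction itself, I expect the left-hand factor $\binom n2$ to come from a second-order degree-counting operator (in the variable that counts star vertices) acting on a single connected series, producing an $n(n-1)$, and the quadratic right-hand side to come from the term of the master equation that is bilinear in $F$, which turns into the convolution $\sum_{n_1+n_2=n}$. The asymmetry — only $n_1$ appears as a prefactor while the binomial depends on $n_2$ and $g_2$ — should reflect that the two connected factors enter the bilinear term through different operators, one contributing the derivative $n_1$ and the other the genus-shifting weight; the constraints $n_1,n_2\geq 1$ then simply record that both factors are genuine connected constellations.

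The step I expect to be the main obstacle is producing the genus-splitting binomial $\binom{(m-1)n_2+2-2g_2}{2g^*+2}$ together with the convolution $g=g_1+g_2+g^*$. By Euler's formula the top equals the number of colored vertices plus faces of the genus-$g_2$ factor (with $n_2$ star vertices of degree $m$, hence $mn_2$ edges, one gets $V_c+F=(m-1)n_2+2-2g_2$), so I expect it to emerge, as in the bipartite case, from expanding in the genus-tracking variable a factor indexed by the Euler characteristic of that piece, the defect $2g^*+2$ recording the handles lost when the two pieces are merged. Matching this expansion against the content weight $r_\lambda$ so that the binomial comes out exactly — rather than up to some spurious polynomial factor — and pinning down the boundary conventions, is the delicate bookkeeping where the specific combinatorics of constellations, as opposed to the general master equation, really enters.
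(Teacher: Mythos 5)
Your proposal is correct and follows essentially the same route as the paper: specialize the master equation \eqref{eq_master} with $r=m+1$, $p_i=q_i=\mathbb{1}_{i=1}$ and all $u_j=u$ (so that $H_{1,1}$ becomes $D^2H$ and $C=DH$ is the rooted connected series), then extract the coefficient of $z^nu^{2n+2g-2}$. The step you flag as the main obstacle works out exactly as you anticipate: under the substitution $z\mapsto z(1\pm u)^{m+1}$, $u\mapsto u/(1\pm u)$, each monomial $z^{n_2}u^{2n_2+2g_2-2}$ of $C$ picks up precisely the factor $(1\pm u)^{(m-1)n_2+2-2g_2}$, and $(1+u)^{v_2}+(1-u)^{v_2}-2=2\sum_{k\geq 1}\binom{v_2}{2k}u^{2k}$ yields the binomial $\binom{(m-1)n_2+2-2g_2}{2g^*+2}$ with no spurious factor, the factor $2$ cancelling against $n(n-1)=2\binom{n}{2}$ on the left-hand side.
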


Theorem \ref{thm_biparti_degre} has an immediate corollary, i.e. a recurrence formula for bipartite $2k$-angulations:

\begin{cor}\label{thm_angulations}
The number $A^{(k)}_{g,n}$ of bipartite $2k$-angulations of genus $g$ with $n$ faces satisfies the following recurrence formula:
\begin{align*}
\binom{kn+1}{2}A^{(k)}_{g,n}=&\sum_{\substack{n_1+n_2=n\\n_1,n_2\geq 1\\g_1+g_2+g^*=g}} (kn_1+1) \binom{(k-1)n_2+2-2g_2}{2g^*+2}A^{(k)}_{g_1,n_1}A^{(k)}_{g_2,n_2}\\&+\sum_{g^*\geq 0}\binom{(k-1)n+2-2(g-g^*)}{2g^*+2}A^{(k)}_{g-g*,n}.
\end{align*}
\end{cor}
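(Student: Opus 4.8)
The plan is to derive Corollary~\ref{thm_angulations} directly from Theorem~\ref{thm_biparti_degre} by specializing the prescribed-degree data to the single-degree case. A bipartite $2k$-angulation with $n$ faces is precisely a bipartite map in which every face has degree $2k$, so it corresponds to the degree sequence $\mathbf{f}=(f_1,f_2,\dots)$ with $f_k=n$ and $f_i=0$ for all $i\neq k$. Writing $A^{(k)}_{g,n}=B_g(\mathbf{f})$ for this particular $\mathbf{f}$, the strategy is simply to substitute this $\mathbf{f}$ into~\eqref{rec_biparti_genre} and simplify every quantity appearing there.

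First I would compute the auxiliary parameters under this specialization. The total edge count becomes $n=\sum_i i f_i = kn$ (with the overloading of $n$ in the two statements handled carefully: the $n$ of Theorem~\ref{thm_biparti_degre} equals $k$ times the $n$ of the corollary). The number of faces is $\sum_i f_i = n$, so the vertex count is $v=2-2g+kn-n=(k-1)n+2-2g$, which explains the third binomial argument in the second sum. Next, in the convolution sum the constraint $\mathbf{s}+\mathbf{t}=\mathbf{f}$ forces both $\mathbf{s}$ and $\mathbf{t}$ to be supported only on the $k$-th coordinate, say $s_k=n_1$ and $t_k=n_2$ with $n_1+n_2=n$; every other split gives $B_{g_i}$ of a sequence with a nonzero entry away from coordinate $k$, which is fine, but the point is that $A^{(k)}$ only records single-degree sequences, so the surviving factors are exactly $B_{g_1}(\mathbf{s})=A^{(k)}_{g_1,n_1}$ and $B_{g_2}(\mathbf{t})=A^{(k)}_{g_2,n_2}$. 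Under this split one gets the partial edge counts $n_1^{\mathrm{edges}}=kn_1$ and $n_2^{\mathrm{edges}}=kn_2$, hence $1+n_1^{\mathrm{edges}}=kn_1+1$, which is the prefactor in the corollary's first sum, and $v_2=2-2g_2+kn_2-n_2=(k-1)n_2+2-2g_2$, matching the first binomial argument.

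Substituting all of these into~\eqref{rec_biparti_genre} and replacing $\binom{n+1}{2}$ on the left by $\binom{kn+1}{2}$ yields exactly the claimed formula, after one more check on the second (non-convolution) sum: there $\binom{v+2g^*}{2g^*+2}$ becomes $\binom{(k-1)n+2-2(g-g^*)}{2g^*+2}$ once we write $v$ in terms of $g-g^*$ rather than $g$, since that term carries the reduced genus $g-g^*$ in its $B$-factor and the Euler relation must be evaluated at that genus. I expect this last bookkeeping step to be the only genuine subtlety: one must verify that the vertex count appearing inside the binomial is computed consistently with the genus of the associated $B$-factor, so that $v+2g^*$ reorganizes correctly into $(k-1)n+2-2(g-g^*)$. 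Everything else is routine substitution, and the convention $B_g(\mathbf{0})=0$ carries over to $A^{(k)}_{g,0}=0$, ensuring the boundary terms vanish.
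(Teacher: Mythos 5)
Your proposal is correct and is exactly the paper's approach: the paper states Corollary~\ref{thm_angulations} as an immediate specialization of Theorem~\ref{thm_biparti_degre} to $\mathbf{f}$ supported on the single coordinate $k$, and your bookkeeping (edge count $kn$, the forced splits $s_k=n_1$, $t_k=n_2$, the vanishing of boundary terms via $B_g(\mathbf{0})=0$, and the rewriting $v+2g^*=(k-1)n+2-2(g-g^*)$) fills in precisely the routine verifications the paper leaves implicit. The only blemish is the muddled phrase about ``every other split,'' since the constraint $\mathbf{s}+\mathbf{t}=\mathbf{f}$ admits no splits supported off coordinate $k$ at all, but this does not affect the argument.
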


\begin{rem}\label{rem_fastest}
Theorem \ref{thm_biparti_degre} allows to compute the number of maps with prescribed degrees way faster than the usual Tutte-Lehman-Walsh approach \cite{LW,BC,Gao} or the topological recursion (see e.g. \cite{Eynard}), especially for large genus (because these methods require counting maps with up to $g$ boundaries to enumerate maps of genus $g$). It can also be specialized to maps with bounded face degrees (contrarily to the Tutte equation). Note that, in order to compute the coefficients recursively, a term from the RHS has to be moved to the LHS, and we need the initial condition $B_0((\mathbb{1}_{i=n}))=Cat(n)$.

We observe that Theorem \ref{thm_constellations} applies to bipartite maps (for $m=2$). However, we have no analogue of Theorem \ref{thm_biparti_degre} (with prescribed face degrees) for $m$-constellations with $m\geq 3$. We give a brief explanation of that fact in Remark \ref{rem_no_face_tracking}.

\end{rem}

\begin{rem}
The coefficients in our recurrence formulas have a combinatorial flavor. It is a natural question to ask for a bijective proof of these formulas. However, the bijective interpretation of formulas  derived from the KP/2-Toda hierarchies is still a widely open question, as bijections have only been found for certain formulas, in the particular cases of one-faced \cite{CFF} and planar maps \cite{Louf}. Note that there is an asymmmetry in the factors in the quadratic sums, contrarily to the formulas in \cite{GJ,CC}, but similarly to \cite{KZ}.
\end{rem}

\section{Constellations and the Toda hierarchy}\label{sec:weighted_hurwitz}

\subsection{The semi-infinite wedge space}

We give some definitions, mostly following the notations of the appendix in \cite{Ok2}: 
\begin{defn}
A \textit{Maya diagram} is a decoration of $\Z+\frac{1}{2}$ with a particle or an antiparticle at each position, such that for some $n_1,n_2$ there are only particles at positions $<n_1$ and only antiparticles at positions $>n_2$. The semi infinite wedge space $\Lambda^{\frac{\infty}{2}}$ is the vector space generated by the Maya diagrams. It is equipped with an inner product by making the Maya diagrams orthogonal to each other and of norm $1$.

For any $k\in \Z+\frac{1}{2}$, we define the \textit{fermion operators} $\psi_k$ and $\psi_k^*$. For each Maya diagram $\mathbf{m}$, we set:
\begin{align*}
\psi_k \mathbf{m}= \begin{cases}
0 & \text{if } \mathbf{m} \text{ has a particle in position } k\\
(-1)^{n_k}\tilde{\mathbf{m}} & \text{otherwise}\\
\end{cases}
\end{align*}
\begin{align*}
 \psi_k^* \mathbf{m}= \begin{cases}
0 & \text{if } \mathbf{m} \text{ has an antiparticle in position } k\\
(-1)^{n_k}\overline{\mathbf{m}} & \text{otherwise}\\
\end{cases}
\end{align*}
where $n_k$ is the number of particles of $\mathbf{m}$ is positions $>k$ (it is finite by definition of a Maya diagram). Also, $\tilde{\mathbf{m}}$ is the same as $\mathbf{m}$ except there is a particle in position $k$, and $\overline{\mathbf{m}}$ is the same as $\mathbf{m}$ except there is an antiparticle in position $k$. Note that $\psi_k$ and $\psi_k^*$ are adjoint operators.

We can now define the \textit{boson operators}: for all $n\in\Z^*$, let \[\alpha_n=\sum_{\substack{k\in \Z+\frac{1}{2}}} \psi_{k-n}\psi_k^*.\]

Finally, the two \textit{vertex operators} are  \[\Gamma_\pm(\mathbf{p})=\exp(\sum_{n=1}^\infty  \frac{p_n}{n}\alpha_{\pm n}).\]
\end{defn}

We will now define diagonal operators over $\Lambda^{\frac{\infty}{2}}$ and relate Maya diagrams to partitions.

\begin{defn}
We define the \textit{normally ordered products} 
\[:\psi_k\psi_k^*:=\begin{cases}
\psi_k\psi_k^* &\text{if } k>0\\
-\psi_k^*\psi_k &\text{if } k<0.\\

\end{cases}\]

Note that, for a Maya diagram $\mathbf{m}$
\[:\psi_k\psi_k^*:\mathbf{m}=\begin{cases}
\mathbf{m} &\text{if } k>0 \text{ and } \mathbf{m} \text{ has a particle in position } k\\
-\mathbf{m} &\text{if } k<0 \text{ and } \mathbf{m} \text{ has an antiparticle in position } k\\
0 &\text{otherwise.}\\

\end{cases}\]
The \emph{charge operator} is:
\[C=\sum_{\substack{k\in \Z+\frac{1}{2}}} :\psi_k\psi_k^*: .\]
The eigenvectors of $C$ are the Maya diagrams. The eigenvalue of a Maya diagram $\mathbf{m}$ is the number of particles in positive position minus the number of antiparticles in negative position. We call this number the \textit{charge} of $\mathbf{m}$.
We introduce the translation operator $R$: for any $\mathbf{m}$, $R\mathbf{m}$ has a particle in position $k+1$ if and only if $\mathbf{m}$ has a particle in position $k$. Note that if the charge of $\mathbf{m}$ is $c$, the charge of $R\mathbf{m}$ is $c+1$, and that the adjoint of $R$ is $R^{-1}$.

There is a bijection between Maya diagrams of charge $0$ and partitions, as depicted in Figure \ref{maya_diagram} (in position $k$, a down-step corresponds to a particle, an up-step corresponds to an antiparticle). Thus, any Maya diagram $\mathbf{m}$ can be encoded by its charge $c$ and a partition $\lambda$ (that corresponds to the Maya diagram $R^{-c} \mathbf{m}$). 
\begin{figure}[!h]
\centering
\includegraphics[scale=0.3]{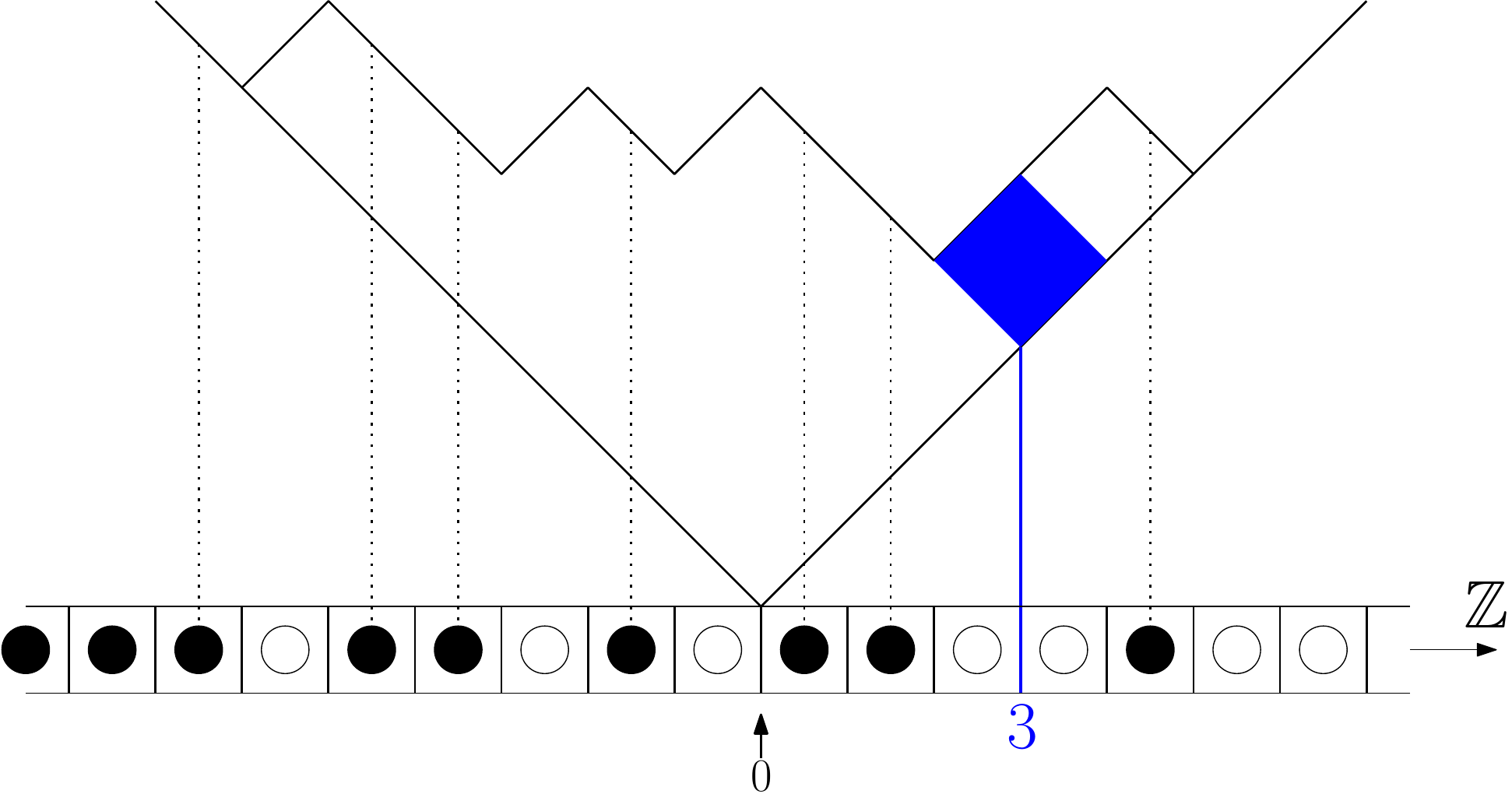}
\caption{A Maya diagram of charge $0$ and its corresponding partition (above it, presented as a rotated Young diagram). Particles are in black. In blue, a box and its content (the abscissa of the projection of the center of the box on $\Z$).}
\label{maya_diagram}
\end{figure}

We will use the braket notation, and denote the Maya diagram corresponding to the empty partition by $\ket{\emptyset}$, and set $\ket{\emptyset_n}=R^n\ket{\emptyset}$. We will also set $\ket{\lambda}$ to be the Maya diagram of charge $0$ corresponding to the partition $\lambda$. 

Finally, we define the \emph{energy operator}
\[H=\sum_{\substack{k\in \Z+\frac{1}{2}}} k:\psi_k\psi_k^*:.\]
In particular,  $H\ket{\lambda}=|\lambda| \ket{\lambda}$, where $|\lambda |$ is the number of boxes in $\lambda $. 

\end{defn}

\subsection{Generating functions as tau functions}

\begin{defn}\label{def_W}
Fix integers $r$, $n$ and $g$, fix $\lambda,\mu$ two partitions of $n$. Let $W_{n}^{\lambda,\mu}(l_1,l_2,…,l_r)$ be the number of $(r+2)$-uples of permutations $(\sigma_1,\sigma_2,…,\sigma_{r},\sigma_\lambda,\sigma_\mu)$ of $\mathfrak{S}_n$ such that $\sigma_1\cdot \sigma_2\cdot …\cdot \sigma_{r}=\sigma_\lambda\sigma_\mu$ and $\sigma_i$ has $l_i$ cycles, and $\sigma_\lambda,\sigma_\mu$ have respective cycle types $\lambda$ and $\mu$.
The $W_{n}^{\lambda,\mu}$ enumerate (labeled, non-necessarily connected) constellations, in accordance with Proposition \ref{prop_constellation_bipartite}. Let $\tau$ be the associated generating function (that implicitly depends on $r$):
\[ \tau(z,\mathbf{p},\mathbf{q},(u_j)) =\sum_{\substack{n\geq 0\\|\mu |=|\lambda |=n\\ l_i\geq 1\text{ }  \forall i} }\frac{z^n}{n!} \prod_{i=1}^r u_i^{n-l_i} p_\lambda q_\mu W_{n}^{\lambda,\mu}(l_1,l_2,…,l_r) \]
\end{defn}

\begin{rem} 
Depending on the specialization that will be applied, these $(r+2)$-uples will either represent $r$- or $(r-1)$-constellations.
\end{rem}
It is a classical result (under different forms and variants, see for instance \cite{GJ,Ok}) that the function $\tau$ can be expressed in terms of elements and operators of $\Lambda^{\frac{\infty}{2}}$:
\begin{lem}[Classical]\label{lem_operator_form}
\begin{equation}\label{eqn_tau_operators}
\tau(z,\mathbf{p},\mathbf{q},(u_j))=\mel{\emptyset}{\Gamma_+(\mathbf{p})z^H\Lambda \Gamma_-(\mathbf{q})}{\emptyset}
\end{equation}
with 
\[F(u)=\sum_{\substack{k>0}}\sum_{\substack{i=0}}^{k-1/2}\log(1+ui)\psi_k\psi_k^*+\sum_{\substack{k<0}}\sum_{\substack{i=0}}^{-k-1/2}\log(1-ui)\psi_k^*\psi_k\]
and  $\Lambda=\prod_{j=1}^r\exp(F(u_j))$.

\end{lem}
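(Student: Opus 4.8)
The plan is to evaluate both sides of \eqref{eqn_tau_operators} independently and to recognise that each equals the Schur series
\[
S:=\sum_{\nu} z^{|\nu|}\, s_\nu(\mathbf{p})\, s_\nu(\mathbf{q})\prod_{j=1}^r\prod_{\square\in\nu}\bigl(1+u_j\, c(\square)\bigr),
\]
where $\nu$ runs over all partitions, $c(\square)=b-a$ is the content of a box in row $a$ and column $b$, and $s_\nu$ is the Schur function written in the power sums $\mathbf{p}$ (resp.\ $\mathbf{q}$). For the right-hand side I would first note that $\Gamma_\pm$, $z^H$ and $\Lambda$ all preserve the charge, so the computation stays in the charge-$0$ sector, where the $\ket{\nu}$ form an orthonormal basis. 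Inserting $\sum_\nu\ket{\nu}\bra{\nu}$ just after $\Gamma_+(\mathbf{p})$ and just before $\Gamma_-(\mathbf{q})$ factorises the matrix element into $\mel{\emptyset}{\Gamma_+(\mathbf{p})}{\nu}$, $\mel{\nu}{z^H\Lambda}{\nu'}$ and $\mel{\nu'}{\Gamma_-(\mathbf{q})}{\emptyset}$. The boson--fermion correspondence gives $\mel{\emptyset}{\Gamma_+(\mathbf{p})}{\nu}=s_\nu(\mathbf{p})$ and $\mel{\nu'}{\Gamma_-(\mathbf{q})}{\emptyset}=s_{\nu'}(\mathbf{q})$, while $z^H\ket{\nu}=z^{|\nu|}\ket{\nu}$ is immediate from $H\ket{\nu}=|\nu|\ket{\nu}$. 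Everything then hinges on the eigenvalue of $\Lambda$.

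Because $F(u)$ is a combination of the diagonal projectors $\psi_k\psi_k^*$, the operator $\Lambda=\prod_j\exp(F(u_j))$ is diagonal in $\{\ket{\nu}\}$, and $\psi_k\psi_k^*$ (resp.\ $\psi_k^*\psi_k$) reads off whether $\ket{\nu}$ carries a particle at $k>0$ (resp.\ an antiparticle at $k<0$). Under the dictionary of Figure \ref{maya_diagram} these excited occupations are exactly the boxes of $\nu$ of positive and of negative content, and I would check that the inner sums $\sum_{i=0}^{k-1/2}\log(1+ui)$ and $\sum_{i=0}^{-k-1/2}\log(1-ui)$ accumulate precisely the values $\log(1+u\,c(\square))$ over the boxes of the corresponding row or column, so that $\exp(F(u))\ket{\nu}=\prod_{\square\in\nu}(1+u\,c(\square))\ket{\nu}$. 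Taking the product over $j$ shows that $z^H\Lambda$ acts by the eigenvalue appearing in $S$, whence the matrix element equals $S$.

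For the left-hand side I would expand each Schur function by Frobenius, $s_\nu(\mathbf{p})=\sum_{\lambda\vdash n}z_\lambda^{-1}\chi^\nu_\lambda\, p_\lambda$, reducing the claim to the identity, for fixed cycle types $\lambda,\mu\vdash n$,
\[
\sum_{\substack{\sigma_1\cdots\sigma_r=\sigma_\lambda\sigma_\mu\\ \mathrm{type}(\sigma_\lambda)=\lambda,\ \mathrm{type}(\sigma_\mu)=\mu}}\ \prod_{i=1}^r u_i^{\,n-\mathrm{cyc}(\sigma_i)}=\frac{n!}{z_\lambda z_\mu}\sum_{\nu\vdash n}\chi^\nu_\lambda\,\chi^\nu_\mu\prod_{i=1}^r\prod_{\square\in\nu}\bigl(1+u_i\,c(\square)\bigr),
\]
where $\mathrm{cyc}$ counts cycles. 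This is the character count of factorisations: replacing the constraint $\sigma_1\cdots\sigma_r\sigma_\mu^{-1}\sigma_\lambda^{-1}=e$ by $\tfrac1{n!}\sum_\nu\chi^\nu(1)\chi^\nu(\,\cdot\,)$, each class function $\sigma_i\mapsto u_i^{\,n-\mathrm{cyc}(\sigma_i)}$ and each class sum over $C_\lambda,C_\mu$ acts on the irreducible module $V^\nu$ as a scalar, and the scalars multiply. The scalar for $\sigma_i$ is, by the Jucys--Murphy identity $\prod_{i=1}^n(1+uJ_i)=\sum_\sigma u^{\,n-\mathrm{cyc}(\sigma)}\sigma$, equal to $\prod_{\square\in\nu}(1+u_i\,c(\square))$; the class sums contribute $|C_\lambda|\chi^\nu_\lambda/\chi^\nu(1)$ and $|C_\mu|\chi^\nu_\mu/\chi^\nu(1)$; the surplus factors of $\chi^\nu(1)$ cancel, and $|C_\lambda|=n!/z_\lambda$, $|C_\mu|=n!/z_\mu$ produce the stated constant. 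Weighting by $\tfrac{z^n}{n!}p_\lambda q_\mu$ and reassembling the Schur functions recovers $S$, completing the identification.

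I expect the main obstacle to be the eigenvalue computation for $\Lambda$: it requires pinning down the exact correspondence between occupied positions of the Maya diagram and box contents, and verifying that the nested logarithmic sums defining $F(u)$ telescope to $\sum_{\square}\log(1+u\,c(\square))$. By contrast, the character-theoretic half is standard once the normalisations (centraliser orders and the cancellation of the powers of $\chi^\nu(1)$) are tracked carefully.
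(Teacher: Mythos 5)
Your proposal is correct, and on the operator side it follows the paper's proof almost verbatim: both arguments reduce the matrix element to the content-product Schur series $\sum_\nu z^{|\nu|}s_\nu(\mathbf{p})s_\nu(\mathbf{q})\prod_{j=1}^r\prod_{\Box\in\nu}\bigl(1+u_j c(\Box)\bigr)$, using the diagonal action of $\Lambda$ (the paper asserts exactly the eigenvalue identity $F(u)\ket{\nu}=\bigl(\sum_{\Box\in\nu}\log(1+uc(\Box))\bigr)\ket{\nu}$ that you flag as the delicate telescoping to check) together with the expansions $\bra{\emptyset}\Gamma_+(\mathbf{p})=\sum_\nu s_\nu(\mathbf{p})\bra{\nu}$ and $\Gamma_-(\mathbf{q})\ket{\emptyset}=\sum_\nu s_\nu(\mathbf{q})\ket{\nu}$, which the paper obtains from the Jacobi--Trudi rule, citing \cite{Ok2}. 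The genuine difference is in the combinatorial half: where you prove the identification of $\tau$ with this Schur series --- Frobenius expansion of $s_\nu$ in power sums, reduction to a count of factorizations evaluated by central characters, the Jucys--Murphy identity $\prod_{i=1}^n(1+uJ_i)=\sum_\sigma u^{n-\mathrm{cyc}(\sigma)}\sigma$ giving the scalar $\prod_{\Box\in\nu}(1+u\,c(\Box))$, and the centralizer bookkeeping with the cancellation of the $\chi^\nu(1)$ factors, all of which is correct as you describe --- the paper simply cites this ``content product form'' as Theorem 3.1 of \cite{GJ}. Your character-theoretic argument is precisely the standard proof of that cited result, so your version buys self-containedness (the lemma is proved from first principles rather than labelled classical), at the cost of length; the paper's choice to outsource that half is consistent with its framing of the lemma as known background.
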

\begin{proof}
First, we have 
\[F(u)\ket{\nu}=\left(\sum_{\Box\in\nu} \log(1+uc(\Box)) \right)\ket{\nu}\]
where the $c(\Box)$ are the contents of the partition $\nu$ (see Figure \ref{maya_diagram}). It can be shown using the Jacobi-Trudi rule (see e.g. \cite{Ok2}) that 

\[\Gamma _ { - } ( \mathbf{q}) \ket { \emptyset } = \sum _ { \nu } s _ { \nu } ( \mathbf{q}) \ket { \nu }\quad \text{and}\quad \bra { \emptyset }\Gamma _ {+} ( \mathbf{p})  = \sum _ { \nu } s _ { \nu } ( \mathbf{p}) \bra { \nu }\]
where the sum spans over all partitions.

Thus the RHS of \eqref{eqn_tau_operators} can be rewritten as:
\[\sum_{\substack{n>0\\|\nu|=n}} z^n \prod_{j=1}^r  \prod_{\Box\in\nu} (1+u_jc(\Box))s _ { \nu } ( \mathbf{p}) s _ { \nu } ( \mathbf{q}).\]

This expression (the "content product form") is equal to $\tau(z,\mathbf{p},\mathbf{q},(u_j))$ (see e.g. \cite{GJ}, Theorem 3.1).

\end{proof}
We introduce the auxiliary functions $\tau_n$, for $n\in\Z$:
 \[\tau_n =\mel{\emptyset_n}{\Gamma_+(\mathbf{p})z^H\Lambda \Gamma_-(\mathbf{q})}{\emptyset_n}.\]
We have $\tau=\tau_0$. The previous lemma, along with classical considerations (see for instance Section 2.6 in \cite{Ok}), imply that the $\tau_n$ satisfy an infinite set of equations, the $2$-Toda hierarchy. In particular, the following equation holds:

\begin{equation}\label{eq_tau_pm_1}
\frac { \partial ^ { 2 } } { \partial p _ { 1 } \partial q _ { 1 } } \log \tau_0  = \frac { \tau _ {  1 } \tau _ { - 1 } } { \tau_0 ^ { 2 } }.
\end{equation}

So far, the content presented was classical. Our first main contribution is to transform the previous equation into an equation implying $H=\log\tau$ only.

\subsection{The master equation}
In this section we derive the following general equation:

\begin{prop}\label{prop_master}
The general generating function of connected constellations $H=\log\tau$ satisfies:
\begin{equation}\label{eq_master}
DH_{1,1}-H_{1,1}=H_{1,1}\left(DH\left(z\cdot\prod_{j=1}^r (1+ u_j),(\frac{u_j}{1+ u_j})\right)+DH\left(z\cdot\prod_{j=1}^r (1- u_j),(\frac{u_j}{1- u_j})\right)-2DH\right)
\end{equation}
with $H_{1,1}=\frac { \partial ^ { 2 } } { \partial p _ { 1 } \partial q _ { 1 } } H$ and $D=z\frac{\partial}{\partial z}$.
\end{prop}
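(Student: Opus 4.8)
The plan is to start from the $2$-Toda equation \eqref{eq_tau_pm_1}, which already expresses $H_{1,1}=\partial^2_{p_1q_1}\log\tau_0$ as $\tau_1\tau_{-1}/\tau_0^2$, and to rewrite the shifted tau functions $\tau_{\pm1}$ as $\tau_0$ evaluated at rescaled arguments. Equation \eqref{eq_master} will then drop out by taking a logarithm and applying $D=z\pl_z$.

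First I would compute $\tau_n$ for general $n$ by writing $\ket{\emptyset_n}=R^n\ket{\emptyset}$ and conjugating the operator $\Gamma_+(\mathbf p)z^H\Lambda\Gamma_-(\mathbf q)$ by $R^n$. Since the vertex operators are built only from the bosons $\alpha_{\pm k}$ with $k\geq1$, and $R$ commutes with all $\alpha_k$ for $k\neq0$, the operators $\Gamma_\pm$ pass through $R$ unchanged, and the whole computation reduces to understanding how the diagonal operators $z^H$ and $\Lambda=\prod_j\exp(F(u_j))$ act on the charge-$n$ states $R^n\ket{\nu}$. Using $\Gamma_-(\mathbf q)\ket{\emptyset}=\sum_\nu s_\nu(\mathbf q)\ket\nu$ and the dual statement on the bra side, together with the orthonormality $\langle R^n\mu|R^n\nu\rangle=\delta_{\mu\nu}$, this yields a content-product expansion of $\tau_n$ indexed by partitions $\nu$.

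The heart of the argument is the eigenvalue bookkeeping on the shifted diagrams. On $R^n\ket\nu$ one checks that $H(R^n\ket\nu)=(\tfrac{n^2}{2}+|\nu|)R^n\ket\nu$, so $z^H$ contributes $z^{n^2/2}$ times the usual $z^{|\nu|}$; the key point is that $F(u)$ acts with eigenvalue $\sum_{\Box\in\nu}\log(1+u(c(\Box)+n))$ up to an additive constant independent of $\nu$ (collecting the contributions of the "frozen" particles at positions $\tfrac12,\dots,n-\tfrac12$), and this constant vanishes for $n=\pm1$. Factoring $1+u(c+n)=(1+un)\bigl(1+\tfrac{u}{1+un}c\bigr)$ converts the content product into that of $\tau_0$ with $z$ replaced by $z\prod_j(1+u_jn)$ and each $u_j$ replaced by $u_j/(1+u_jn)$; for $n=\pm1$ this gives
\[
\tau_{\pm1}=z^{1/2}\,\tau_0\Bigl(z\textstyle\prod_j(1\pm u_j),\ \mathbf p,\ \mathbf q,\ \bigl(\tfrac{u_j}{1\pm u_j}\bigr)\Bigr).
\]
I expect this charge-shift identity — and especially verifying that the $\nu$-independent constant vanishes at $n=\pm1$, which requires tracking the particle positions of $R^{\pm1}\ket\nu$ with care — to be the main obstacle.

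Finally I would substitute into \eqref{eq_tau_pm_1}. Taking logarithms, and using $H=\log\tau_0$ together with the fact that the two $z^{1/2}$ prefactors combine into a single $\log z$, gives $\log H_{1,1}=\log z+H\bigl(z\prod_j(1+u_j),(\tfrac{u_j}{1+u_j})\bigr)+H\bigl(z\prod_j(1-u_j),(\tfrac{u_j}{1-u_j})\bigr)-2H$. Applying $D=z\pl_z$ and using that $D$ is scale-covariant — so $D$ applied to $H$ evaluated at $z\prod_j(1\pm u_j)$ returns $DH$ evaluated at the same rescaled arguments, the $u_j$ being independent of $z$ — yields $DH_{1,1}/H_{1,1}=1+DH\bigl(z\prod_j(1+u_j),\dots\bigr)+DH\bigl(z\prod_j(1-u_j),\dots\bigr)-2DH$. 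Multiplying through by $H_{1,1}$ and moving the term coming from $D\log z=1$ to the left-hand side produces exactly \eqref{eq_master}.
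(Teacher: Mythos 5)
Your proposal is correct and follows essentially the same route as the paper: the same key charge-shift identity $\tau_{\pm1}=z^{1/2}\,\tau\bigl(z\prod_{j=1}^r(1\pm u_j),\mathbf{p},\mathbf{q},(\tfrac{u_j}{1\pm u_j})\bigr)$ (the paper's Lemma \ref{lem_tau_aux}), substituted into the 2-Toda equation \eqref{eq_tau_pm_1}, followed by the same logarithmic-derivative step (your $D\log$ of both sides is the paper's application of $D-1$ combined with reusing \eqref{eq_H_exp}). The only difference is presentational: you prove the charge-shift identity by eigenvalue bookkeeping on the shifted states $R^{\pm1}\ket{\nu}$ in the Schur expansion (correctly identifying that the frozen-particle constant vanishes precisely at charge $\pm1$, e.g.\ $f_+(\tfrac12)=\log(1+u\cdot 0)=0$), whereas the paper conjugates operators, computing $RFR^{-1}=F(\tfrac{u}{1-u})+(H-C/2)\log(1-u)$ and $R^{-n}HR^n=H+nC+\tfrac{n^2}{2}$; these are the same computation viewed in the state picture versus the operator picture.
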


\begin{rem}

In the formula above, we omitted some of the arguments of H. For instance, $H:=H(z,\mathbf{p},\mathbf{q},(u_j))$, and $H\left(z\cdot\prod_{j=1}^r (1+ u_j),(\frac{u_j}{1+ u_j})\right):=H\left(z\cdot\prod_{j=1}^r (1+ u_j),\mathbf{p},\mathbf{q},(\frac{u_j}{1+ u_j})\right)$.

\end{rem}

This formula will be the starting point for all the particular cases we will consider in the next section: for each model, we will apply a particular specialization of the variables, then interpret combinatorially the operator $\frac{\partial}{\partial p_1}\frac{\partial}{\partial q_1}$ (depending on the model), and finally the extraction of coefficients will give us the relevant formulas.

We first need to relate the auxilliary functions $\tau_1$ and $\tau_{-1}$ to the generating function $\tau$. 

\begin{lem}\label{lem_tau_aux}
\[\tau_{\pm 1}(z,\mathbf{p},\mathbf{q},(u_j))=z^{1/2}\tau\left(z\cdot\prod_{j=1}^r (1\pm u_j),\mathbf{p},\mathbf{q},(\frac{u_j}{1\pm u_j})\right)\]
\end{lem}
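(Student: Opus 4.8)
The plan is to compute $\tau_{\pm 1}$ directly from its operator definition by exploiting the shift operator $R$, which conjugates the energy operator $H$ and the diagonal operator $\Lambda$ into explicit rescalings. Recall that $\ket{\emptyset_n}=R^n\ket{\emptyset}$ and that $\tau_{\pm 1}=\mel{\emptyset_{\pm 1}}{\Gamma_+(\mathbf{p})z^H\Lambda\Gamma_-(\mathbf{q})}{\emptyset_{\pm 1}}$. The key algebraic fact I would establish first is how $R$ commutes past $z^H$ and past $\Lambda=\prod_j\exp(F(u_j))$. Since $H=\sum_k k\,{:}\psi_k\psi_k^*{:}$ measures the number of boxes, shifting a charge-$0$ diagram up by $R$ and reading off the charge-$0$ partition introduces a computable change in energy; concretely I expect a relation of the form $R^{-1}z^H R = z\cdot z^H$ (up to the precise constant coming from the content/box bookkeeping), which is the source of the prefactor $z^{1/2}$ after symmetrization between $\bra{\emptyset_{\pm 1}}$ and $\ket{\emptyset_{\pm 1}}$.

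Next I would handle the operator $\Lambda$. Because $F(u)$ is diagonal in the Maya basis with eigenvalue $\sum_{\Box\in\nu}\log(1+u\,c(\Box))$ on a charge-$0$ diagram, conjugating by $R$ shifts all contents $c(\Box)$ by $1$ (the translation moves every box's content up by one). The effect of replacing each content $c$ by $c+1$ inside $\log(1+uc)$ is exactly to send $\log(1+uc)\mapsto\log\!\big(1+u(c+1)\big)=\log(1+u)+\log\!\big(1+\tfrac{u}{1+u}\,c\big)$. Summing over the $r$ factors and over the boxes of $\nu$, the $\log(1+u)$ terms collect into an overall factor $\prod_j(1+u_j)^{|\nu|}$, which combines with $z^{|\nu|}$ to rescale $z\mapsto z\cdot\prod_j(1+u_j)$, while the residual $\log(1+\tfrac{u}{1+u}c)$ terms are precisely $\Lambda$ evaluated at the shifted parameters $u_j\mapsto \tfrac{u_j}{1+u_j}$. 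This is the computation that produces both rescalings appearing in the statement, and the $\mp$ case is identical with $u\mapsto -u$.

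The cleanest way to organize all this is to work in the content-product form from the proof of Lemma~\ref{lem_operator_form}: expand $\Gamma_-(\mathbf{q})\ket{\emptyset_{\pm1}}$ and $\bra{\emptyset_{\pm1}}\Gamma_+(\mathbf{p})$ in Schur functions (the shifted vacuum still gives Schur functions, since $R$ merely relabels), insert the diagonal action of $z^H\Lambda$, and then match the resulting content-product series term by term against the content-product form of $\tau$ evaluated at the rescaled arguments. The main obstacle I anticipate is the careful tracking of the shift in the energy exponent and the contents — in particular pinning down the exact constant that yields $z^{1/2}$ rather than some other power of $z$, and confirming that the charge-$\pm1$ vacuum contributes symmetrically on the bra and ket sides. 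Everything else is bookkeeping, but that prefactor requires getting the box-counting under $R$ exactly right.
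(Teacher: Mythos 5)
Your plan follows the same route as the paper's proof: both exploit the shift operator $R$ (which commutes with $\Gamma_\pm$, justifying your claim that the shifted vacuum still expands in Schur functions), and both reduce the lemma to the observation that shifting a Maya diagram by one unit shifts every content by $\pm 1$, together with the factorization $1+u(c\pm 1)=(1\pm u)\bigl(1+\tfrac{u}{1\pm u}c\bigr)$. In the paper this appears as the operator identity $RF(u)R^{-1}=F\bigl(\tfrac{u}{1-u}\bigr)+(H-C/2)\log(1-u)$, and your Schur/content-product bookkeeping is just the diagonal (eigenvalue) version of the same computation, so this part is sound.

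The one step you have not got right is the prefactor. The relation you guess, $R^{-1}z^HR=z\cdot z^H$, is false, and ``symmetrization between $\bra{\emptyset_{\pm 1}}$ and $\ket{\emptyset_{\pm 1}}$'' is not the mechanism: $z^H$ is diagonal, so nothing is shared between the bra and the ket side. The correct statement, which the paper records as $R^{-n}HR^n=H+nC+\tfrac{n^2}{2}$, gives $R^{\mp 1}z^HR^{\pm 1}=z^{1/2}\,z^{\pm C}z^H$; equivalently, the $H$-eigenvalue of the shifted state $R^{\pm 1}\ket{\nu}$ is $|\nu|+\tfrac{1}{2}$, the $\tfrac{1}{2}$ being the energy $c^2/2$ of the charge-$(\pm 1)$ vacuum. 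This quadratic dependence on the charge is why the prefactor is the same for $+1$ and $-1$, which may be what you meant by symmetry. Note also the extra factor $z^{\pm C}$ (and the $(H-C/2)$ term in the conjugated $\Lambda$): these are harmless only because they end up acting on charge-$0$ states, where $C$ vanishes, and a complete write-up must say so explicitly. With that one identity in place, the rest of your plan goes through verbatim and reproduces the paper's proof.
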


\begin{proof}
We will describe how $H$, $C$ and $F$ behave under the action of the shift operator, then using the operator form \eqref{eqn_tau_operators} of $\tau$ we will derive the result.

It is easily verified that the opertors $R$, $C$ and $H$ commute with $\Gamma_+(\mathbf{p})$ and $\Gamma_-(\mathbf{q})$. We also have $R^{-n}HR^n=H+nC+\frac{n^2}{2}$.

By a careful change of indices,
\begin{align*}
 RFR^{-1}&=\sum_{\substack{k>0}}\sum_{\substack{i=1}}^{k-1/2}\log(1+ui)\psi_{k+1}\psi_{k+1}^*+\sum_{\substack{k<0}}\sum_{\substack{i=1}}^{-k-1/2}\log(1-ui)\psi_{k+1}^*\psi_{k+1}\\
&=\sum_{\substack{k>0}}\sum_{\substack{i=0}}^{k-1/2}\log(1-u+ui)\psi_{k}\psi_{k}^*+\sum_{\substack{k<0}}\sum_{\substack{i=0}}^{-k-1/2}\log(1-u-ui)\psi_{k}^*\psi_{k}\\
 &=F(\frac{u}{1-u})+(H-C/2)\log(1-u).
\end{align*}

Since $\ket{\emptyset_{-1}}=R^{-1}\ket{\emptyset}$, we have
\begin{align*}
\tau_{-1}(z,\mathbf{p},\mathbf{q},(u_j))&=\mel{\emptyset_{-1}}{\Gamma_+(\mathbf{p})z^H\Lambda \Gamma_-(\mathbf{q})}{\emptyset_{-1}}\\
&=\mel{\emptyset}{R\Gamma_+(\mathbf{p})z^H\Lambda \Gamma_-(\mathbf{q})R^{-1}}{\emptyset}\\
&=\mel{\emptyset}{\Gamma_+(\mathbf{p})z^{RHR^{-1}}R\Lambda R^{-1} \Gamma_-(\mathbf{q})}{\emptyset}\\
&=\mel{\emptyset}{\Gamma_+(\mathbf{p})z^{H-C+1/2}\prod_{j=1}^r\exp\left(F(\frac{u_j}{1-u_j})+(H-C/2)\log(1-u_i) \right)\Gamma_-(\mathbf{q})}{\emptyset}\\
&=z^{1/2}\mel{\emptyset}{\Gamma_+(\mathbf{p})(z\prod_{j=1}^r (1-u_j))^H\Lambda\left((\frac{u_j}{1-u_j})\right)\Gamma_-(\mathbf{q})}{\emptyset}\\
&=z^{1/2}\tau\left(z\cdot\prod_{j=1}^r (1- u_j),\mathbf{p},\mathbf{q},(\frac{u_j}{1- u_j})\right).
\end{align*}

Similarly, \[\tau_{ 1}(z,\mathbf{p},\mathbf{q},(u_j))=z^{1/2}\tau\left(z\cdot\prod_{j=1}^r (1+ u_j),\mathbf{p},\mathbf{q},(\frac{u_j}{1+ u_j})\right).\]

\end{proof}

\begin{rem}
The idea of expressing $\tau_{\pm 1}$ in terms of $\tau$ by calculating $R^{\mp 1}\Lambda R^{\pm 1}$ is inspired by the calculation performed in \cite{Ok}, Section 2.7.
\end{rem}

We can now prove Proposition \ref{prop_master}.

\begin{proof}[Proof of Proposition \ref{prop_master}]

Using Lemma \ref{lem_tau_aux}, we can interpret \eqref{eq_tau_pm_1} as an equation implying $\tau$ only:

\[\frac { \partial ^ { 2 } } { \partial p _ { 1 } \partial q _ { 1 } } \log \tau  = z\frac { \tau\left(z\cdot\prod_{j=1}^r (1+ u_j),\mathbf{p},\mathbf{q},(\frac{u_j}{1+ u_j})\right)\tau\left(z\cdot\prod_{j=1}^r (1- u_j),\mathbf{p},\mathbf{q},(\frac{u_j}{1- u_j})\right) } { \tau  ^ { 2 } }.\]

Substituting $H=\log\tau$ in the above equation, one obtains:
\begin{equation}\label{eq_H_exp}
H_{1,1}=z\exp\left(H(z\cdot\prod_{j=1}^r (1+ u_j),(\frac{u_j}{1+ u_j}))+H(z\cdot\prod_{j=1}^r (1- u_j),(\frac{u_j}{1- u_j}))-2H\right).
\end{equation}

Finally, we get \eqref{eq_master} by applying the operator $D-1$ to both sides of \eqref{eq_H_exp} and getting rid of the exponential part by using \eqref{eq_H_exp} another time.

\end{proof}
\section{Proof of the main formulas}\label{sec:rec_formulas}
In the following subsections, we will specialize some of the variables to fit the cases we care about. To avoid tedious notations, and as there is no risk of ambiguity, the specialization of the function $H$ will still be called $H$.
\subsection{Bipartite maps}

In this section, we want to count bipartite maps while controlling the degrees of the faces. Thus, we will consider the case $r=2$, and specialize $H$ by setting $u_1=u_2=u$ and $q_i=\mathbb{1}_{i=1}$.

Let $B_g(\mathbf{f})$ be the number of (rooted) bipartite maps of genus $g$ with $f_i$ faces of degree $2i$, and $B(z,\mathbf{p},u)$ be the ordinary generating function of connected rooted bipartite maps, defined as
\[B=\sum_{\substack{g,\mathbf{f}}} z^n u^{2n-v} \prod_{i\geq 1}p_i^{f_i} B_g(\mathbf{f}).\]
with $n=\sum_i if_i$ and $v-n+\sum_i f_i = 2-2g$ (Euler formula).

Equation \eqref{eq_master} can be rewritten in terms of $B$ only:
\begin{lem}
\begin{equation}\label{eq_biparti_GF}
(D+1)DB=(u^{-2}+(D+1)B)\left( B(z(1+u)^2,\mathbf{p},\frac{u}{1+u})+B(z(1-u)^2,\mathbf{p},\frac{u}{1-u})-2B \right).
\end{equation}
\end{lem}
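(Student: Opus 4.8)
The plan is to convert \eqref{eq_master} into an equation for $B$ by first building a dictionary between the specialized $H$ and $B$, and then reading the operator $\partial_{p_1}\partial_{q_1}$ combinatorially; the crux is interpreting $\partial_{p_1}$ as the marking of a degree-$2$ face.

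First I would unwind the specialization $r=2$, $u_1=u_2=u$, $q_i=\mathbb{1}_{i=1}$. Setting $q_{\ge 2}=0$ forces $\mu=(1^n)$, hence $\sigma_\mu=\mathrm{id}$ and $\sigma_1\sigma_2=\sigma_\lambda=\phi$: by Proposition \ref{prop_constellation_bipartite} these are exactly (labeled) bipartite maps, with $\mathbf{p}$ recording face degrees and $u^{2n-l_1-l_2}=u^{2n-v}$ recording vertices. Since $H=\log\tau$ is, by the exponential formula, the generating series of connected such objects weighted by $z^n/n!$, the $(n-1)!$-to-$1$ labeled/rooted correspondence of Definition \ref{def_map_const} shows that the $z^n$-coefficient of the specialized $H$ equals $\tfrac1n$ times that of $B$. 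Hence $(DH)|_{\mathrm{spec}}=B$, so the three occurrences of $DH$ in \eqref{eq_master} become $B$, $B(z(1+u)^2,\mathbf p,\tfrac{u}{1+u})$ and $B(z(1-u)^2,\mathbf p,\tfrac{u}{1-u})$. Moreover $\partial_{q_1}$ acts on the sector $\mu=(1^n)$ by multiplication by the number of parts $n$ — that is, by rooting — so $(\partial_{q_1}H)|_{\mathrm{spec}}=B$ as well, and as $\partial_{p_1}$ commutes with the $q$-specialization we get $H_{1,1}|_{\mathrm{spec}}=\partial_{p_1}B$.

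The heart of the argument, and the step I expect to be hardest, is to identify $\partial_{p_1}B$. This operator marks (and removes from the weight) a fixed point of $\sigma_\lambda=\phi$, i.e.\ a face of degree $2$, and I would prove the identity $\partial_{p_1}B=zu^2\bigl(u^{-2}+(D+1)B\bigr)$. Its combinatorial content is that for $n\ge 2$ a degree-$2$ face cannot be the face around a leaf (that configuration forces the single-edge map), so it must be a bigon; deleting one of its two bounding edges yields a rooted bipartite map with one fewer edge but the same vertex count and genus, which accounts for the weight factor $zu^2$ and for the multiplicity encoded by $(D+1)$, while the single-edge map at $n=1$ contributes the stray term $z=zu^2\cdot u^{-2}$. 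Getting this bigon surgery and, above all, the rooting/multiplicity bookkeeping exactly right is the delicate point.

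Finally I would carry out the algebra. Writing $Q:=u^{-2}+(D+1)B$, the two preceding steps turn \eqref{eq_master} into $(D-1)(zu^2 Q)=zu^2 Q\,\bigl(B(z(1+u)^2,\mathbf p,\tfrac{u}{1+u})+B(z(1-u)^2,\mathbf p,\tfrac{u}{1-u})-2B\bigr)$. Since $u$ is constant in $z$ and $(D-1)(zQ)=zDQ$, the left-hand side equals $zu^2\,DQ$; cancelling the nonzero factor $zu^2$ leaves $DQ=Q\bigl(B(z(1+u)^2,\cdots)+B(z(1-u)^2,\cdots)-2B\bigr)$. As $Du^{-2}=0$ we have $DQ=(D+1)DB$, which is precisely \eqref{eq_biparti_GF}.
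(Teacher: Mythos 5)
Your proposal is correct and follows essentially the same route as the paper: the same specialization dictionary ($B=DH$ and $\partial_{q_1}\mapsto D$ on the sector $\mu=(1^n)$), the same combinatorial reading of $\partial_{p_1}$ as marking a degree-$2$ face with the single-edge map as the exceptional term, and the same algebraic reassembly of \eqref{eq_master} into \eqref{eq_biparti_GF}. The only cosmetic difference is that the paper computes $\frac{\partial}{\partial p_1}H=z+u^2zB$ by contracting the marked digon and lets the Leibniz rule produce the $(D+1)$ when $D$ is applied, whereas you build the $(D+1)$ directly into the edge-deletion surgery on rooted maps (the root possibly being the deleted edge accounting for the ``$+1$''); these are equivalent, since $H_{1,1}|_{\mathrm{spec}}=\partial_{p_1}B=D\,\bigl(\partial_{p_1}H|_{\mathrm{spec}}\bigr)$.
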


\begin{proof}
In this section, $H$ is the (exponential) generating function of labeled bipartite maps, and as mentioned in Definition \ref{def_map_const}, there is a $(n-1)!-to-1$ correspondence between labeled and rooted bipartite maps. Thus
\[B=DH\]

We will now express $H_{1,1}$ in terms of $B$. The specialization $q_i=\mathbb{1}_{i=1}$ implies that only the terms $z^nq_1^n$ form the original function survived, and thus in this case \[\frac{\partial}{\partial q_1}H=DH.\] Finally, applying $\frac{\partial}{\partial p_1}$ corresponds to marking a digon. 
A marked digon can be contracted into a marked edge (see Figure \ref{fig_digon_contraction}) except when the bipartite map is just one edge, thus $\frac{\partial}{\partial p_1}H=z+u^2zDH=z+u^2zB$ (the $u^2z$ factor comes from the fact that we lose an edge when we contract the digon, and the $z$ term is the case where we cannot contract the digon). 
\end{proof}

We are finally ready to prove Theorem \ref{thm_biparti_degre}.

\begin{proof}[Proof of Theorem \ref{thm_biparti_degre}]
We look at the factor $B(z(1+u)^2,\mathbf{p},\frac{u}{1+u})+B(z(1-u)^2,\mathbf{p},\frac{u}{1-u})-2B$ in \eqref{eq_biparti_GF}.
The coefficient of $z^n \prod_{i\geq 1} p_i^{f_i}$ in it is:
\begin{align*}
\sum_{v>0} B_g(\mathbf{f})u^{2n-v}\left( (1+u)^{v}+(1-u)^v-2\right)=\sum_{v>0}B_g(\mathbf{f})u^{2n-v}\ \left( 2 \sum_{0<k\leq\frac{v}{2}} u^{2k}\binom{v}{2k}\right).
\end{align*}

In the sum above, we have, by Euler's formula,  $g= \frac{n-\sum f_i -v+2}{2}$ (with the convention that $B_g(\mathbf{f})=0$ if $g$ is not an integer).
Extracting the coefficient of $z^n u^{2n-v}\prod_{i\geq 1} p_i^{f_i}$  in \eqref{eq_biparti_GF}, one gets the result.
\end{proof}

\begin{figure}
\center
\includegraphics[scale=0.8]{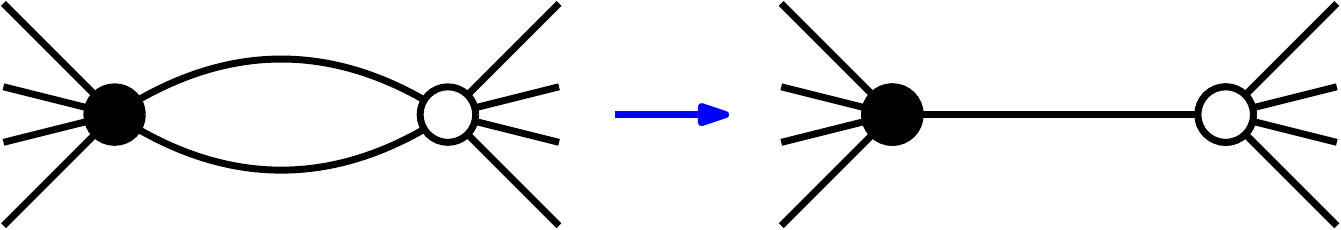}
\caption{Contracting a digon.}\label{fig_digon_contraction}
\end{figure}
\subsection{Constellations}

In this section, we will count constellations without controlling the degrees of the faces. For that, we will specialize $H$ by taking $r=m+1$, $p_i=q_i=\mathbb{1}_{i=1}$, and $u_i=u$ for all $i$. The variable $u$ counts the number of colored vertices plus the number of faces, or equivalently, by Euler's formula, the genus.

\begin{proof}[Proof of Theorem \ref{thm_constellations}]
After the specialization, $H_{1,1}$ becomes $D^2 H$ by the same argument as in the proof of Theorem \ref{thm_biparti_degre}. If we take $C$ to be the (ordinary) generating function of connected constellations, i.e.
\[C=\sum_{g,n}z^n u^{2n+2g-2}C^{(m)}_{g,n}, \]
we have, as before, $C=DH$. Equation \eqref{eq_master} becomes
\begin{equation}\label{eq_const_GF}
(D^2-D)C=DC\left(C\left(z(1+u)^{m+1},\frac{u}{1+ u}\right)+C\left(z(1- u)^{m+1},\frac{u}{1- u})\right)-2C\right)
\end{equation}

To finish the proof, we proceed exactly as in the proof of Theorem \ref{thm_biparti_degre}: first calculate the coefficient of $z^n$ in 
\[C\left(z(1+u)^{m+1},\frac{u}{1+ u}\right)+C\left(z(1- u)^{m+1},\frac{u}{1- u})\right)-2C,\]
then just extract the coefficient of $z^n u^{2n+2g-2}$ in \eqref{eq_const_GF} (the suitable exponent of $u$ is derived by the Euler formula).
\end{proof}

\begin{rem}\label{rem_no_face_tracking}
This time, we cannot track the degrees of the faces, as in general the combinatorial operation of contracting an $m$-gon might disconnect the map, and the formula gets messy. However, if we restrict to only one face we can perform this operation to recover a nice formula (see Section \ref{sec:additional}).
\end{rem}
\section{Additional results}\label{sec:additional}

\subsection{One-faced constellations}

In this section, we will derive a recurrence formula for constellations with one face. In the case of bipartite maps, the formula is just a particular case of \eqref{rec_biparti_genre}, but for $m\geq 3$, it cannot be derived from Theorem \ref{thm_constellations} directly. One-faced constellations were first enumerated in \cite{PS}: an exact formula given the degree distribution of each colored vertex is provided. While the following formula does not give control over the degrees of the vertices, it is much quicker to calculate the "global" (i.e. controlling only the genus and the number of vertices) number of one-faced constellations for $m \geq 3$ (for $m=2$, i.e. bipartite maps, a nice formula for one-faced bipartite maps can be found in \cite{Adrianov}).

\begin{thm}\label{thm_unicellular}
Let $U_m(g,n)$ be the number of one-faced $m$-constellations of genus $g$ with $n$ star vertices. Also, let $U^{(k)}_m(g,n)$ be the number of one-faced $m$-constellations of genus $g$ with $n$ star vertices and $k$ distinguished (pairwise distinct) colored vertices, i.e. $U^{(k)}_m(g,n)=\binom{(m-1)n+1-2g}{k}U_m(g,n)$. We have the following recurrence formula:
\begin{equation}\label{eq_unicellular}
\frac{n(n+1)^{m-1}}{2}U_m(g,n)=\sum_{g^*=0}^g U^{(2+2g^*)}_m(g-g^*,n)
\end{equation}
\end{thm}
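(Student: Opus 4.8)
\textbf{Proof proposal for Theorem \ref{thm_unicellular}.}

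The plan is to follow the same pipeline as in the proofs of \cref{thm_biparti_degre,thm_constellations}, starting from the master equation \eqref{eq_master}, but now performing a specialization that isolates the one-faced case. Since we want constellations with a single face, the natural move is to specialize the $q$-variables so that only the term $q_n$ (a single cycle of length $n$ in the face permutation $\phi$) survives, i.e. set $q_i = \mathbb{1}_{i=1}$ will \emph{not} do here — instead I expect to extract the one-faced part directly from the content-product structure, perhaps by working with a generating function $U = \sum_{g,n} z^n u^{(m-1)n+1-2g}\, U_m(g,n)$ in which the exponent of $u$ is dictated by the Euler formula (a one-faced $m$-constellation of genus $g$ with $n$ star vertices has $(m-1)n + 1 - 2g$ colored vertices, since the number of faces is $1$). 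As in the other proofs, I would use $C = DH$ and interpret $H_{1,1}$ combinatorially; the specialization $p_i = q_i = \mathbb{1}_{i=1}$ and $u_j = u$ reduces $H_{1,1}$ to $D^2 H$, and the key difference is that with a single face the quadratic term on the right-hand side of \eqref{eq_master} degenerates.

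The crucial simplification is that for one-faced objects the product $\tau_1 \tau_{-1}$ cannot split into a genuine convolution of two nonempty connected pieces: gluing two one-faced constellations along the marked-vertex operation of \cref{rem_no_face_tracking} produces a single face, so only the ``diagonal'' (self-interaction, $g^*$-handle-adding) part of the right-hand side contributes. Concretely, I expect that after the one-faced specialization the right-hand side of \eqref{eq_master} collapses to a single sum over $g^*$ in which one factor is forced to be the trivial (one–star-vertex, genus-zero) constellation, turning the quadratic recurrence into the linear recurrence \eqref{eq_unicellular}. The binomial coefficient $\binom{(m-1)n_2 + 2 - 2g_2}{2g^*+2}$ appearing in \cref{thm_constellations}, specialized to the case where the whole object is one-faced, should become $\binom{(m-1)n + 1 - 2g + (2g^*+1)}{2g^*+2}$-type factors that I would then reorganize into the ``distinguished colored vertices'' count $U^{(2+2g^*)}_m(g-g^*,n) = \binom{(m-1)(\ldots)+1-2(\ldots)}{2+2g^*}U_m(\ldots)$, matching the definition in the statement. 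The prefactor $\frac{n(n+1)^{m-1}}{2}$ on the left should emerge from applying $D^2 - D$ (giving the $\binom{n}{2}$-type factor $\frac{n(n-1)}{2}$ or $\frac{n(n+1)}{2}$) together with the substitution $z \mapsto z(1\pm u)^{m+1}$, whose expansion at the level of a single face contributes the $(n+1)^{m-1}$ factor after extracting the appropriate power of $u$.

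The main obstacle, I expect, will be \emph{correctly tracking the Euler-formula bookkeeping} so that the powers of $u$ and the arguments of the binomial coefficients line up into exactly $U^{(2+2g^*)}_m(g-g^*,n)$. Unlike the bipartite case, where digon contraction is always (nearly) well-defined, here the delicate point is to argue — as flagged in \cref{rem_no_face_tracking} — that restricting to a single face makes the $m$-gon contraction operation unambiguous, so that the combinatorial interpretation of $\frac{\partial}{\partial p_1}\frac{\partial}{\partial q_1}$ is clean and the generating-function identity \eqref{eq_master} specializes without the ``messy'' disconnection terms. Once that is established, extracting the coefficient of $z^n u^{(m-1)n+1-2g}$ and identifying the distinguished-vertex binomials with the definition of $U^{(k)}_m$ should be a routine (if careful) calculation, exactly parallel to the final steps in the proofs of \cref{thm_biparti_degre,thm_constellations}.
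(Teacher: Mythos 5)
Your proposal has the right skeleton --- specialize the master equation \eqref{eq_master}, interpret $H_{1,1}$ through an $m$-gon contraction, and argue that the quadratic right-hand side degenerates so that one factor is forced to be the trivial one-star-vertex constellation --- and that last intuition is indeed how the paper's proof works (though the actual mechanism is a degree count in $p_n$: after specialization every term of the bracket contains at least one $p_n$, so extracting the coefficient of $p_n^1$ forces the $H_{1,1}$ factor to contribute its $p_n$-free part, which is precisely the one-star-vertex term). However, the specialization you actually commit to, $p_i=q_i=\mathbb{1}_{i=1}$ with $r=m+1$, is the one from the proof of Theorem~\ref{thm_constellations} and cannot work here: it erases all face information, which is exactly the obstruction flagged in Remark~\ref{rem_no_face_tracking}, and no amount of post-processing recovers the one-faced sector from it. The proof must keep the $p$-variables alive to pin down the single face: take $r=m$, set $q_i=\mathbb{1}_{i=1}$, $u_j=u$, $z=1$, $p_i=0$ for $i\neq n$, and extract the coefficient of $p_n^1$. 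Then the surviving terms of $H$ are exactly the (labeled) one-faced constellations with $n$ star vertices, and the surviving terms of $H_{1,1}$ are constellations with $n+1$ star vertices, one face of degree $mn$ and one marked $m$-gon.

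The second, more serious gap is that you misattribute the prefactor $n(n+1)^{m-1}$: it does not come from the substitution $z\mapsto z(1\pm u)^{m+1}$ (the substitution, whose exponent is $m$ here, lives on the right-hand side and produces the binomial coefficients $\binom{(m-1)n+1-2(g-g^*)}{2g^*+2}$, exactly as in the proofs of Theorems~\ref{thm_biparti_degre} and~\ref{thm_constellations}). It comes entirely from the left-hand side: $(D-1)$ acting on the surviving $z^{n+1}$-terms gives a factor $n$; $\partial_{q_1}$ acting on $q_1^{n+1}$ gives a factor $n+1$; and, crucially, contracting the marked $m$-gon is an $(n+1)^{m-2}$-to-one correspondence. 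This last point is the heart of the paper's proof and is absent from your proposal: one encodes the marked object as a tuple $(\phi,\sigma_1,\dots,\sigma_m)$ in $\mathfrak{S}_{n+1}$ with $\phi=\prod_i\sigma_i=(1,2,\dots,n)(n+1)$ and $\sigma_1(1)=n+1$, deletes the element $n+1$ from $\sigma_1,\dots,\sigma_{m-1}$, and redefines $\sigma_m$ via the product constraint, obtaining a rooted one-faced constellation of size $n$; reconstructing requires choosing the preimage of $n+1$ in $\sigma_i$ for each $1<i<m$, whence $(n+1)^{m-2}$ choices. This is the $m$-gon analogue of the digon contraction of Figure~\ref{fig_digon_contraction}, and it is exactly where the restriction to one face is used (it is what makes the contraction well-defined, unlike the general case of Remark~\ref{rem_no_face_tracking}). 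Without this counting step the factor $(n+1)^{m-1}$ cannot be produced, so your outline, as written, would not close.
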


\begin{rem}
This formula reminds of the formula for one-faced maps proven bijectively by Chapuy in \cite{trisections}. Indeed, it allows to calculate the number of one-faced maps of genus $g$ in terms of number of maps of lower genus with the same number of edges and some distinguished vertices. The difference, although, is that in Chapuy's formula there are an odd number of distinguished vertices, whereas in \eqref{eq_unicellular} there are an even number of distinguished vertices.


Nevertheless, there might be a connection as those formulas arise in the same algebraic context. Our formula is obtained via the 2-Toda hierarchy, whereas Chapuy's is an intermediate step to prove the Harer-Zagier recurrence formula (see \cite{CFF}), which is itself a special case of a formula obtained via the KP hierarchy: the Carrell-Chapuy recurrence formula \cite{CC}.
\end{rem}

To prove \eqref{eq_unicellular}, we will take $r=m$ and apply the following specialization to \eqref{eq_master}: fix an integer $n$, and set $q_i=\mathbb{1}_{i=1}$, $u_i=u$ for all $i$, as well as $z=1$. Set also $p_i=0$ for all $i\neq n$, and extract the coefficient of $p_n^1$. $H$ is now simply a polynomial in $u$. It counts labeled one-faced constellations. Let $U$ be the associated polynomial for rooted objects, the classical correspondence between labeled and rooted objects yields $U=DH$. As before, there is a "marked $m$-gon", and we need to interpret this combinatorially:

\begin{lem}
After the specialization, the LHS of \eqref{eq_master} becomes $n(n+1)^{m-1}U$
\end{lem}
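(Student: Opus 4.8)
The plan is to understand how the operator $D - 1$ on the left-hand side of \eqref{eq_master}, namely $DH_{1,1} - H_{1,1}$, acts after the specialization $r=m$, $q_i = \mathbb{1}_{i=1}$, $u_i = u$, $z=1$, with $p_i = 0$ for $i \neq n$ and extraction of the coefficient of $p_n^1$. First I would track the two effects that operator $D = z\partial_z$ has in this one-faced specialization. Since $z$ is set to $1$ at the end, but $D$ records the number of star vertices, $D$ acts as multiplication by the number of star vertices; for an object counted in $H$ with $n$ star vertices, $D$ multiplies by $n$. The goal is to show $DH_{1,1} - H_{1,1} = (D-1)H_{1,1}$ evaluates to $n(n+1)^{m-1}U$, so I expect the factor $(n+1)^{m-1}$ to emerge from the combinatorial interpretation of $H_{1,1} = \partial_{p_1}\partial_{q_1} H$ rather than from $D$ itself.

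Next I would interpret $H_{1,1}$ combinatorially, as in the proofs of \Cref{thm_biparti_degre} and \Cref{thm_constellations}. The specialization $q_i = \mathbb{1}_{i=1}$ makes $\partial_{q_1} H = DH$ by the same argument used earlier (only the $z^n q_1^n$ terms survive, so differentiating in $q_1$ and then, after $z=1$, pulls down a factor equal to the number of star vertices). The subtler step is $\partial_{p_1}$: here $p_1$ marks a colored vertex of color $1$ that is a fixed point (a ``degree-one'' colored vertex in the appropriate sense, analogous to the marked digon/$m$-gon). I would argue that $\partial_{p_1} H$ corresponds to distinguishing such a marked vertex and that, combined with the one-face condition $z=1$, $p_i = 0$ for $i \neq n$, extracting $p_n^1$, this marked-vertex operation together with the powers of $(1+u)$ coming from the contents (recall $F(u)\ket{\nu} = (\sum_{\Box} \log(1+uc(\Box)))\ket{\nu}$ and the $r=m$ copies) produces the polynomial factor $(n+1)^{m-1}$. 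The cleanest route is probably to compute $H_{1,1}$ directly from the content-product form of $\tau$ in this fully-specialized setting, where $H$ becomes an explicit polynomial in $u$, and read off how $\partial_{p_1}\partial_{q_1}$ acts on the rooted generating polynomial $U = DH$.

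I would then assemble the pieces: writing $U = DH$ for the rooted one-faced generating polynomial, I want $(D-1)H_{1,1} = n(n+1)^{m-1} U$. The factor $n(n+1)^{m-1}$ should split as $n$ (from the $D$-counting of star vertices acting on the rooted object, or equivalently the shift $D-1$ applied to a labeled-to-rooted normalization) times $(n+1)^{m-1}$ (from the combinatorial marking/contraction of the marked $m$-gon in a one-faced constellation, where each of the $m-1$ relevant color classes contributes a factor $(n+1)$ reflecting the number of available insertion positions once the face structure is fixed). I would make this precise by identifying, in the one-faced case, how contracting the marked $m$-gon reduces to a smaller one-faced constellation and counting the ways the marking can be placed — this is where the binomial coefficients $\binom{(m-1)n+1-2g}{2+2g^*}$ in \eqref{eq_unicellular} ultimately originate from the right-hand side of \eqref{eq_master}.

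The main obstacle I anticipate is pinning down the precise combinatorial meaning of $\partial_{p_1}$ in the one-faced, $z=1$ specialization and justifying the exact power $(n+1)^{m-1}$: unlike the bipartite case where contracting a digon cleanly removes one edge, here the marked $m$-gon contraction interacts with all $m$ colors, and I must verify that the one-face restriction prevents the disconnection problem flagged in \Cref{rem_no_face_tracking} so that the operation is well-defined and contributes a clean multiplicative factor. I expect the honest work is a careful content-product computation showing the polynomial $H(u)$ satisfies exactly the claimed identity, with the combinatorial ``marked $m$-gon'' picture serving to explain, rather than rigorously derive, the factor $(n+1)^{m-1}$.
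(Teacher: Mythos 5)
Your proposal follows the same skeleton as the paper (interpret $D$, $\partial_{q_1}$, $\partial_{p_1}$ combinatorially, then contract the marked $m$-gon), but the quantitative heart of the lemma --- where exactly $n(n+1)^{m-1}$ comes from --- is both misattributed and never derived, and your intermediate claims are mutually inconsistent. The correct accounting is: the only monomials of $H_{1,1}$ surviving the specialization are those of $z^{n+1}p_np_1q_1^{n+1}$; crucially the marked $m$-gon carries its own star vertex, so these objects have $n+1$ star vertices, not $n$. Hence $DH_{1,1}=(n+1)H_{1,1}$, giving $(D-1)H_{1,1}=nH_{1,1}$, and $H_{1,1}=(n+1)\partial_{p_1}H$ (one factor of $n+1$ from $\partial_{q_1}$). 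The last factor is $\partial_{p_1}H=(n+1)^{m-2}U$: writing the object as a tuple $(\phi,\sigma_1,\dots,\sigma_m)$ in $\mathfrak{S}_{n+1}$ with $\phi=(1,2,\dots,n)(n+1)$ and $\sigma_1(1)=n+1$, the contraction deletes $n+1$ from the cycles of $\sigma_1,\dots,\sigma_{m-1}$ and sets $\sigma_m'=\phi'\bigl(\prod_{i=1}^{m-1}\sigma_i'\bigr)^{-1}$; inverting it requires remembering the preimage of $n+1$ in $\sigma_i$ only for $1<i<m$, since color $1$ is pinned by the rooting convention and color $m$ is determined by the product relation. That is $m-2$ factors of $n+1$, not $m-1$. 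Your claim that ``each of the $m-1$ relevant color classes contributes a factor $(n+1)$'' in the contraction, combined with your separate assertion that $\partial_{q_1}H=DH$ contributes another such factor, would give $(D-1)H_{1,1}=n(n+1)^{m}U$, too large by $(n+1)$; your total only matches because you simultaneously take $D$ to act as multiplication by $n$ on $H_{1,1}$, which is false for the reason above.

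Beyond the bookkeeping, the factor is never actually proved: you defer to an unspecified ``content-product computation,'' and as you concede, the marked-$m$-gon picture in your write-up explains rather than derives $(n+1)^{m-1}$. Note that the content-product route is not innocent either, since the content form gives $\tau$ while the lemma is about $H=\log\tau$, i.e.\ connected objects; the paper's actual proof avoids this by working entirely with the permutation model and counting the fibers of the contraction exactly. Two smaller corrections: after the specialization $q_i=\mathbb{1}_{i=1}$, the $p$-variables track the cycle type of $\phi=\sigma_1\cdots\sigma_m$, so $\partial_{p_1}$ marks a face of degree $m$, not ``a colored vertex of color $1$ that is a fixed point''; and the disconnection worry you raise (cf.\ Remark \ref{rem_no_face_tracking}) is resolved for free in the permutation picture, because the contracted tuple contains the full cycle $\phi'=(1,\dots,n)$, so transitivity, hence connectedness, is automatic.
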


\begin{proof}
The only terms of $H_{1,1}$ in \eqref{eq_master} that survive the specialization are the coefficients of $z^{n+1}p_np_1q_1^{n+1}$. Thus we have $DH_{1,1}=(n+1)H_{1,1}$ and $H_{1,1}=(n+1)\frac{\partial}{\partial p_1}H$. The LHS of \eqref{eq_master} is therefore equal (after specialization) to $n(n+1)\frac{\partial}{\partial p_1}H$. It remains to show that $\frac{\partial}{\partial p_1}H=(n+1)^{m-2}\cdot U$. 

Applying $\frac{\partial}{\partial p_1}$ corresponds to marking an $m$-gon. As in the proof of Theorem \ref{thm_biparti_degre}, it kills all symmetries, thus there is a $(n+1)!$-to-$1$ correspondence between labeled constellations and constellations with a marked $m$-gon. Therefore, $\frac{\partial}{\partial p_1}H$ is the ordinary generating function of connected unlabeled $m$-constellations with one face of degree $mn$ and one face of degree $m$.

We will work with permutations to make things easier. Connected unlabeled $m$-constellations with one face of degree $mn$ and one face of degree $m$ are in bijection with $(m+1)$-uples of permutations $\phi, \sigma_1, … \sigma_m$ of $\mathfrak{S}_{n+1}$ satisfying the following constraints:
\begin{itemize}
\item  $\phi=\prod_{i=1}^m \sigma_i$
\item In cycle products, $\phi$ is written $(1,2,…,n)(n+1)$.
\item The image of $1$ by $\sigma_1$ is $n+1$.
\end{itemize}

We can describe the operation of  "contracting an $m$-gon" on the permutations. To $\phi, \sigma_1, … \sigma_m$ we will associate a $(m+1)$-uple $\phi', \sigma_1', … \sigma_m'$ of permutations of $\mathfrak{S}_{n}$:
\begin{itemize}
\item To $\phi$, we associate $\phi'=(1,2,…,n)$
\item For $1\leq i<m$, to $\sigma_i$ we associate the permutation $\sigma_i'$ where in the cycle product we just deleted the element $n+1$ (see Figure \ref{contract_mgon})
\item To $\sigma_m$ we associate $\sigma_m'=\phi'\left(\prod_{i=1}^{m-1}\sigma_i'\right)^{-1}$
\end{itemize}
This exactly describes a rooted $m$-constellation with one face of degree $mn$. To go back, one needs to remember, for $1<i<m$, what was the preimage of $n+1$ in $\sigma_i$ (including possibly $n+1$ itself). There are $n+1$ possible choices for each $i$, thus after the specialization, $\frac{\partial}{\partial p_1}H=(n+1)^{m-2}\cdot U$.

\begin{figure}
\center
\includegraphics[scale=0.8]{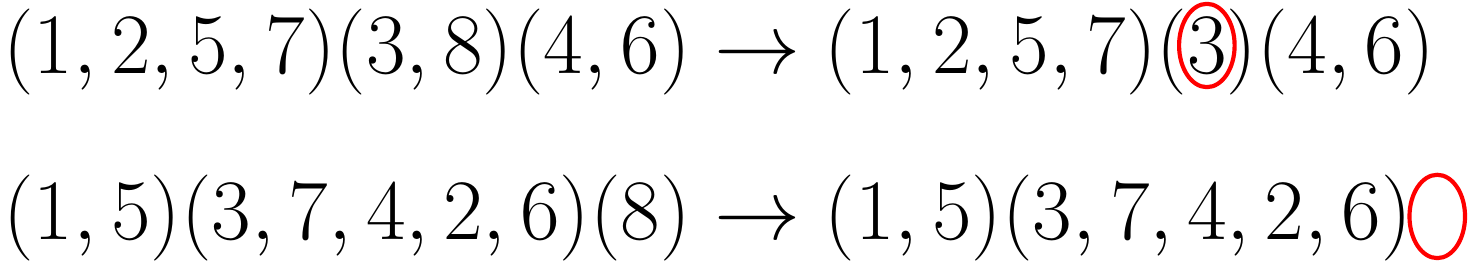}

\caption{Deleting $n+1$, for $n=7$, whether $n+1$ is a fixed point or not}
\label{contract_mgon}
\end{figure}
\end{proof}

A simple calculation in the right-hand side finishes the proof:

\begin{proof}[Proof of Theorem \ref{thm_unicellular}]
In the RHS, we have a product of two terms. Since $H$ has no constant coefficient in the $p_i$'s, after specialization we get the coefficient of $p_n^0$ of $H_{1,1}$ (which is just $u^{m-1}$, corresponding to the constellation with only one star vertex) times the coefficient of $p_n^1$ in 
\[DH\left((1+u )^m,\frac{u}{1+ u}\right)+DH\left((1- u)^m,\frac{u}{1- u}\right)-2DH.\]
Again, since $DH=U$, we can extract the coefficient of $z^nu^{mn-v}$ (where $v=(m-1)n+1-2g$ by Euler's formula), as in the proof of Theorem \ref{thm_biparti_degre}, and obtain the result.
\end{proof}

\subsection{Controlling more parameters}

In each of the previous cases, we specialized a lot of variables to obtain formulas for "global" coefficients. Starting over from \eqref{eq_master} without specializing some of the variables, one is able to obtain (slightly more complicated) formulas for more fine-grained coefficients. As an example, we can calculate the number $C^{(m)}_{g,n,f}$ of $m$-constellations of genus $g$, with $n$ star vertices and $f$ faces:
\begin{equation}\label{eq_const_faces}
\binom{n}{2}C^{(m)}_{g,n,f}=\sum n_1 \binom{f_2}{k}\binom{2g_2-f_2+(m-1)n_2}{2g^*+2-k}C^{(m)}_{g_1,n_1,f_1}C^{(m)}_{g_2,n_2,f_2}
\end{equation}
where the sum is over $n_1+n_2=n$, $n_1,n_2>0$, $g^*\geq 0$, $g_1+g_2+g^*=g$ and  $f_1+f_2-k=f$.

The proof of Theorem \eqref{eq_const_faces} is essentially the same as the proof of Theorem \ref{thm_constellations}, except that we do not specialize $u_i=u$ for all $i$, but only for $i\leq m$. In this case, $u$ counts colored vertices, and $u_{m+1}$ counts faces.

\begin{rem}
Even though the summation is complicated, \eqref{eq_const_faces} allows to compute all the coefficients $C^{(m)}_{g,n,f}$ from the initial condition $C^{(m)}_{g,1,f}=1$ iff $g=0$ and $f=1$, and $0$ otherwise.

However, it does not restrict to a formula for one-faced constellations.
\end{rem}

We can also find formulas for other models, with other specializations. Relevant models include bipartite maps (with prescribed face degrees), one-faced constellations, or (general) constellations, with control over the number of vertices of each color. We can also obtain a formula for triangulations (by specializing $r=1$, $p_i=\mathbb{1}_{i=2}$, $q_i=\mathbb{1}_{i=3}$), but it is more complicated (and less "combinatorial") than the Goulden--Jackson formula \cite{GJ}. The reader is encouraged to play with \eqref{eq_master} to find other nice formulas.

%
%
%
%
%
%
%

\subsection{Univariate generating series}

A relevant corollary of our results is that the formulas we obtain allow to compute the univariate generating series of some given models of maps ($2k$-angulations counted by faces, constellations counted by star vertices, etc.). To illustrate this fact, fix an integer $k$ and let $F_g(z)$ be the generating series of genus $g$ bipartite $2k$-angulations:
\[F_g(z)=\sum_{n>0} A^{(k)}_{g,n}z^n\]
with the coefficients $A^{(k)}_{g,n}$ as defined in Corollary \ref{thm_angulations}.
Our formula gives an algorithm to compute every $F_g$ for $g\geq 1$, given $F_0$.
Indeed, take $g\geq 1$, Corollary \ref{thm_angulations} rewrites
\begin{equation}\label{eq_series}
\Delta F_g=\phi(z,F_0,F_1,…,F_{g-1})
\end{equation}
with
\[\Delta=\binom{kD+1}{2}-\left(\binom{(k-1)D+2}{2}F_{0}\right)(kD+1)-\left((kD+1)F_{0}\right)\binom{(k-1)D+2}{2}-\binom{(k-1)D+2}{2},\]
where $D=z\frac{\partial}{\partial z}$, and $\phi$ is a polynomial in its variables and their (first and second) derivatives.
It is well known (see for instance \cite{BDG})) that
\[F_0=t-z\binom{2k-1}{k+1}t^{k+1}-1\]
with the change of variable
\[t=1+z\binom{2k-1}{k}t^k.\]
Note that we have a "$-1$" in the expression of $F_0$ because we do not count the "empty map".

Assuming we know $F_h$ for $h<g$, this gives a linear, second order ODE in $F_g$ (with respect to the variable $t$). Since all the $F_g$'s are rational in $t$ (see for instance \cite{ChapuyFang}), all the coefficients of the equation are themselves rational, and the solutions can be computed explicitly. The initial conditions are given by the two following facts: $[z^0]F_g=0$ and $[z^1]F_g$ is the number of unicellular bipartite maps of genus $g$ with $k$ edges, that can for instance be computed using Theorem \ref{thm_unicellular}.

\section{Monotone Hurwitz numbers}\label{sec:monotone}
In this section, we derive a recurrence formula for monotone Hurwitz numbers, in a similar fashion as in previous sections. These numbers, which appear in the calculation of the HCIZ integral, were introduced in \cite{monotone}.

\begin{defn}
For two transpositions of $\mathfrak{S}_n$, we say that $(i,j)\preceq (k,l)$ if $\max(i,j)\leq \max(k,l)$.
The double monotone Hurwitz number $\vec{H}_{g,n}^{\lambda,\mu}$ is $\frac{1}{n!}$ times the number of tuples $(t_1,t_2,…,t_r,\sigma_\lambda,\sigma_\mu)$ of permutations of $S_n$ such that:
\begin{itemize}
\item $r=l(\lambda)+l(\mu)+2g-2$ where $l(\lambda)$ is the number of parts of $\lambda$
\item $t_1,t_2,…,t_r$ is an increasing sequence of transpositions
\item $\sigma_\lambda$ (resp. $\sigma_\mu)$ has cycle type $\lambda$ (resp. $\mu$)
\item $t_1\cdot t_2 \cdot …\cdot t_{r}=\sigma_\lambda\sigma_\mu$
\item the permutations $t_1,t_2,…,t_r,\sigma_\lambda$ act transitively on ${1,2,…,n}$
\end{itemize}

The simple monotone Hurwitz numbers $\vec{H}_{g,n}
^{\lambda}$ are defined as $\vec{H}_{g,n}^{\lambda}=\vec{H}_{g,n}^{\lambda,1^n}$.

\end{defn}

We will set $W_{g,n}^{\lambda,\mu}$ to be the same numbers without the transitivity condition, and introduce 
\[\tau(z,\mathbf{p},\mathbf{q},u)=\sum_{\substack{n\geq 0\\ |\lambda |=|\mu |=n\\ r\geq 0}} \frac{z^n}{n!}p_\lambda q_\mu u^{r} W_{g,n}^{\lambda,\mu}\]
with $g$ such that $r=l(\lambda)+l(\mu)+2g-2$. $H=\log\tau$ is the generating function of the $\vec{H}_{g,n}^{\lambda,\mu}$.

As before, it can be shown (see for instance \cite{GPH}) that
\[\tau(z,\mathbf{p},\mathbf{q},u)=\mel{\emptyset}{\Gamma_+(\mathbf{p})z^H\Lambda \Gamma_-(\mathbf{q})}{\emptyset}\]
with $\Lambda=\exp(-F(-u))$, where $F$ is the function defined in Lemma \ref{lem_operator_form}.
A general equation similar to \eqref{eq_master} can be derived:
\begin{equation}\label{eq_master_monotone}
D  H_{1,1} - H_{1,1} = H_{1,1} \left( D H \left( \frac { z } { 1 + u } , \frac { u } { 1 + u } \right) + D H \left( \frac { z } { 1 - u } , \frac { u } { 1 - u } \right) - 2 D H \right)
\end{equation}
with $H_{1,1}=\frac { \partial ^ { 2 } } { \partial p _ { 1 } \partial q _ { 1 } } H$ and $D=z\frac{\partial}{\partial z}$.

Similarly as with constellations, in general we cannot even track the cycle type of $\sigma_\lambda$, although, from the specialization $p_i=q_i=\mathbb{1}_{i=1}$ for all $i$ we can obtain a recurrence formula for the unramified monotone Hurwitz numbers $\vec{H}_{g,n}=\vec{H}_{g,n}^{1^n}$:
\begin{equation}\label{eq_monotone}
n\binom{n}{2}\vec{H}_{g,n}=\sum_{\substack{n_1+n_2=n\\g^*\geq 0\\g_1+g_2+g^*=g}}n_1^2n_2\binom{3n_2+2g_2+2g^*-1}{2g^*+2}\vec{H}_{g_1,n_1}\vec{H}_{g_2,n_2}.
\end{equation}

\begin{rem}
In this paper, the number $\vec{H}_{g,n}^{\lambda,\mu}$ are defined with a scaling factor of $\frac{1}{n!}$ to make the formula simpler, this is a different convention as in \cite{monotone}. Formula \eqref{eq_monotone} allows to compute all the $\vec{H}_{g,n}$ only knowing that $\vec{H}_{0,1}=1$.
\end{rem}

\section*{Acknowledgements}
The author wishes to thank Guillaume Chapuy for suggesting the problem and for useful discussions, as well as anonymous reviewers for their comments.

\bibliography{sample.bib}{}
\bibliographystyle{plain}





 \end{document}